\newtheorem{theorem}{Theorem}
\newtheorem{definition}{Definition}
\newtheorem{lemma}{Lemma}
\newtheorem{corollary}{Corollary}
\newtheorem{proposition}{Proposition}
\newtheorem{remark}{Remark}
\providecommand{\R}{\mathbb{R}}
\newcommand{\Lo}{\mathbb{L}}
\newcommand{\binomial}[2]{\left(\!\!\begin{array}{c} #1 \\  #2 \end{array}\!\!\right)}
\def\@evenhead{\raisebox{0pt}[\headheight][0pt]{%
\vbox{{\hbox to \textwidth{\thepage\leftmark\strut}}\hrule}}}
\def\@evenfoot{}
\def\@oddfoot{}
\author{Daniel Gerth\footnotemark[1],\ \ 
 Bernd Hofmann\footnotemark[1], \  
Christopher Hofmann\footnotemark[1],\\ 
and  Stefan Kindermann\footnotemark[2]}
\title{The Hausdorff Moment Problem in the light of ill-posedness of type I}
\date{}
\begin{document}
\renewcommand{\thefootnote}{\fnsymbol{footnote}}
\maketitle
\footnotetext[1]{Chemnitz University of Technology, \\
Faculty of Mathematics, 09107 Chemnitz, Germany,\\
Email: {\tt daniel.gerth@mathematik.tu-chemnitz.de,\\ 
bernd.hofmann@mathematik.tu-chemnitz.de,\\ 
christopher.hofmann@mathematik.tu-chemnitz.de}}
\footnotetext[1]{Johannes Kepler University Linz, Industrial Mathematics Institute,\\
Altenbergerstra{\ss}e 69, A-4040 Linz, Austria,\\
Email: {\tt kindermann@indmath.uni-linz.ac.at}}



\begin{abstract} 
{The Hausdorf moment problem (HMP) over the unit interval in an $L^2$-setting is a classical example of an ill-posed inverse problem. Since various applications can be rewritten  in terms of the HMP, it has gathered significant attention in the literature. From the point of view of regularization it is of special interest because of the occurrence of a non-compact forward operator with non-closed range. Consequently, HMP constitutes one of few examples of a linear ill-posed problem of type~I in the sense of Nashed. In this paper we highlight this property and its consequences, for example, the existence of
a infinite-dimensional subspace of stability.
On the other hand, we show conditional stability estimates for the HMP
in Sobolev spaces that indicate severe ill-posedness for the full recovery of a function from its moments, because H\"{o}lder-type stability can be excluded. However, the associated recovery
of the function value at the rightmost point of the unit interval is stable of H\"{o}lder-type in an $H^1$-setting. We moreover discuss stability estimates for the truncated HMP, where the forward operator becomes compact. Some numerical case studies illustrate the theoretical results and complete the paper.}
\end{abstract} 
\vskip 0.2cm

\noindent {\bf Key words:} Hausdorff moment problem, ill-posedness of type~I, linear non-compact forward operator, conditional stability estimates, numerical case studies.

\vskip 0.2cm

\noindent {\bf AMS Mathematics Subject Classification:}  44A60, 47A52, 65J20

\vskip 0.3cm

\setcounter{figure}{0}

\section{Introduction}

In the past few decades, there has been developed a comprehensive theory and practice for the stable approximate solution of ill-posed linear operator equations
\begin{equation} \label{eq:opeq}
A\, x\,=\,y\,,
\end{equation}
modelling inverse problems with bounded linear forward operators $A:X \to Y$ mapping between infinite-dimensional separable Hilbert spaces $X$ and $Y$. We refer in this context to the corresponding chapters of the monographs \cite{BakuKok04,EHN96,Groetsch07,Hanke17,Hof99,Ito15,Kaba12,Kirsch11,LuPer13,Nair09,Tik95}. Hadamard (cf.~\cite{Hadamard23}) introduced the concept of well-posedness applicable to an operator equation \eqref{eq:opeq} that requires injectivity, surjectivity and continuous invertibility of the operator $A$. If $A$ violates at least one of the three conditions, then \eqref{eq:opeq} is ill-posed in the sense of Hadamard.
For linear problems \eqref{eq:opeq}, which are ill-posed in the sense of Hadamard, Zuhair Nashed focused in \cite{Nashed87}
on the stability aspect, which is the most important one for the numerical analysis. Moreover, ibid he introduced the distinction of two types of ill-posedness. The following definition (cf.~\cite[Def.~2]{HofPla18}) is a consequence of this introduction.
\begin{definition}[well-posedness vs.~ill-posedness] \label{def:Nashed}
We call a linear operator equation \eqref{eq:opeq}  well-posed in the sense of Nashed if  the range $\mathcal{R}(A)$ of the bounded linear operator $A$ is a closed subset of $Y$, consequently ill-posed in the sense of Nashed if the range is not closed, i.e., $\mathcal{R}(A) \not= \overline{\mathcal{R}(A)}^{Y}$.
In the ill-posed case, the equation \eqref{eq:opeq} is called ill-posed of type~I if the range $\mathcal{R}(A)$ contains an infinite-dimensional closed subspace, and ill-posed of type~II otherwise.
\end{definition}
\begin{remark} \label{rem:rem2}
{\rm
A necessary condition for ill-posedness in the sense of Definition~\ref{def:Nashed} is that
\begin{equation}\label{eq:infdim}
{\rm dim}\,(\mathcal{R}(A))=\infty\,.
\end{equation}
On the other hand, a criterion differentiating well-posedness from ill-posedness in the Hilbert space setting is delivered by the Moore-Penrose pseudoinverse
$$A^\dagger: \mathcal{R}(A) \oplus \mathcal{R}(A)^\perp \subset Y \to \mathcal{N}(A)^\perp \subset X$$ of the forward operator $A$. If and only if the linear operator $A^\dagger$ is a bounded one, the range $\mathcal{R}(A)$ is closed and hence the operator equation \eqref{eq:opeq} is well-posed (see also \cite[\S2.1]{EHN96}).
For Hilbert spaces $X$ and $Y$, the equation \eqref{eq:opeq} is, under the condition \eqref{eq:infdim}, ill-posed in the sense of Nashed of type~II  if and only if the operator $A$ is compact  \cite[Thm.~4.6]{Nashed87}. Well-posedness in the sense
of Definition~\ref{def:Nashed} does, however,  not exclude the case of non-injective $A$ possessing non-trivial null-spaces $\mathcal{N}(A)$.
Note that an analog to Definition~\ref{def:Nashed} in Banach spaces, but only for injective $A$, has been discussed in \cite{Flemming15}. Extended discussions of the non-injective case can be found in \cite[\S1.2.4]{Flemming18}.
}
\end{remark}

The vast majority of linear ill-posed operator equations \eqref{eq:opeq} with applications in natural sciences and imaging have a compact forward operator $A$ and are hence of type~II in the sense of Nashed. Mostly, these problems can be written as linear Fredholm integral equations of the first kind
\begin{equation} \label{eq:inteq}
[A\,x](s):=\int \limits _\Omega k(s,t)\,x(t)\,dt = y(s)\qquad (s \in \Sigma),
\end{equation}
with a non-degenerate kernel $k: \Sigma \times \Omega \subset \mathbb{R}^{d_1} \times  \mathbb{R}^{d_2} \to \mathbb{R}$ that implies a compact forward operator $A: X=L^2(\Omega) \to Y=L^2(\Sigma)$
for sufficiently regular and bounded subsets $\Sigma$ of $\mathbb{R}^{d_1}$ and $\Omega$ of $\mathbb{R}^{d_2}$. If, for example, the kernel $k$ is square integrable, i.e., $k \in L^2(\Sigma \times \Omega)$, then $A$ is compact and even of Hilbert-Schmidt type. Such linear operator equations that are ill-posed of type~II with compact operator $A$ are characterized by the singular system $\{\sigma_i,u_i,v_i\}_{i=1}^\infty$ (cf.~\cite[\S2.2]{EHN96}) with singular values
$$\|A\|=\sigma_1 \ge \sigma_2 \ge ... \ge \sigma_i \ge \sigma_{i+1} \ge ... \to 0 \quad \mbox{as} \quad i \to \infty.$$
Then the corresponding degree of ill-posedness of equation \eqref{eq:opeq} is represented by the decay rate of the singular values, and we refer in this context for details to \cite[\S3.1.6]{Hof99} or \cite[Def.~8]{HofKin10}.

Discussions about the smaller class of operator equations \eqref{eq:opeq} being ill-posed of type~I with non-compact operators $A$ are often focusing on multiplication operators
\begin{equation} \label{eq:mult}
[A\,x](t):= m(t)\,x(t) = y(s)\qquad (t \in \Omega),
\end{equation}
with $A:X=L^2(\Omega) \to Y=L^2(\Omega)$, where the multiplier function $m \in L^\infty(\Omega)$ possesses essential zeros (cf., e.g.,~\cite{Hof06,HofFlei99,MNH20}). In the present paper, however, we will consider another ill-posed problem of type~I, which occurs when one tries to recover a real function over $[0,1]$ from the infinite sequence of its moments. Then the linear operator $A$ maps from the separable Hilbert space $X=L^2(0,1)$ of quadratically integrable real functions over the unit interval $[0,1]$ to the separable Hilbert sequence space $Y=\ell^2$. This is a variant of the Hausdorff moment problem (HMP). It will be introduced in {Section~\ref{sec:HMPintro}} and discussed further with respect to analysis
and numerics in the subsequent sections. {Section~\ref{sec:proof}} delivers a proof of the ill-posedness of type~I and some further characterization of the HMP. The truncated version of the HMP using a finite number of moments is introduced in {Section~\ref{sec:truncated}} and further discussed with respect to error estimates in {Section~\ref{sec:errest}}. Relations to other problems and inversion are presented in
{Section~\ref{sec_relations}}. On the other hand, {Section~\ref{sec:stabest}} completes the analysis of the HMP with a series of stability results under Sobolev-type smoothness assumptions, which indicate the severe ill-posedness of the full recovery of a function from its moment. However, the associated recovery of the function value at the rightmost point of the unit interval is stable of H\"older-type.
{Section~\ref{sec:casestudies}} complements and illustrates the theory by means of some numerical case studies. To prepare for all this, we complete in the following the introductory section with some additional assertions on ill-posed problems of type~I.

\medskip

There is a simple relation between the ill-posedness of the linear operator equation \eqref{eq:opeq} with forward operator $A:X \to Y$
mapping between Hilbert spaces and the associated equation
\begin{equation} \label{eq:adj}
A^*\, y\,=\,x\,,
\end{equation}
with the adjoint operator $A^*:Y \to X$ to $A$ as forward operator.
\begin{proposition}
The operator equation \eqref{eq:opeq} is ill-posed of type~I in the sense of Nashed if and only if the adjoint equation
\eqref{eq:adj} is ill-posed of type~I.
\end{proposition}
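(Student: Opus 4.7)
The plan is to reduce the equivalence to two classical duality results for bounded linear operators between Hilbert spaces, namely the Banach closed range theorem and Schauder's theorem, combined with the characterization of type~II ill-posedness via compactness that is recalled in Remark~\ref{rem:rem2}.

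By Hilbert space reflexivity one has $A^{**}=A$, so it suffices to prove the implication from left to right. Assume therefore that $A\,x=y$ is ill-posed of type~I. First, $\mathcal{R}(A)$ is not closed, and since every finite-dimensional subspace of a Hilbert space is automatically closed, this forces $\dim \mathcal{R}(A)=\infty$. The closed range theorem transfers both the non-closedness and (by the same finite-dimensional argument) the infinite-dimensionality to $\mathcal{R}(A^*)$, so the adjoint equation $A^*\,y=x$ is itself Nashed-ill-posed and satisfies the premise $\dim \mathcal{R}(A^*)=\infty$ needed to apply Remark~\ref{rem:rem2}.

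It remains to exclude that $A^*\,y=x$ is of type~II. By Remark~\ref{rem:rem2}, type~II (under the infinite-dim range hypothesis, now secured on both sides) coincides with compactness of the forward operator. Since $A$ is of type~I, $A$ is not compact; Schauder's theorem then yields that $A^*$ is not compact either. Hence $A^*\,y=x$ is ill-posed with infinite-dimensional range and non-compact forward operator, and applying Remark~\ref{rem:rem2} once more rules out type~II. In view of Definition~\ref{def:Nashed}, the adjoint equation is therefore ill-posed of type~I, as claimed.

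The only obstacle is a bookkeeping one: one has to make sure that the infinite-dimensional-range prerequisite present in Remark~\ref{rem:rem2} is available for $A$ and for $A^*$ simultaneously. As noted above, this is automatic from the closed range theorem together with the triviality that finite-dimensional subspaces of Hilbert spaces are closed, so the whole argument reduces to a clean chain of standard textbook duality facts and leaves no genuinely technical step.
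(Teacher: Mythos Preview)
Your proof is correct and follows essentially the same route as the paper: closed range theorem to transfer ill-posedness, Schauder's theorem to transfer (non-)compactness, and the characterization from Remark~\ref{rem:rem2} that, for ill-posed problems in Hilbert spaces, type~I is equivalent to non-compactness of the forward operator. The only difference is that you spell out the reflexivity reduction to one direction and the automatic infinite-dimensionality of a non-closed range, whereas the paper leaves these implicit.
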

\begin{proof}
The closed range theorem states that $\mathcal{R}(A)$ is closed if and only if
$\mathcal{R}(A^*)$ is closed. Thus, we have the result that the operator equation \eqref{eq:opeq} is ill-posed
in the sense of Nashed if and only if the adjoint equation \eqref{eq:adj} is ill-posed.
Moreover, Schauder's theorem says that $A$ is compact if and only if
$A^*$ is. Finally, as stated above,
it is  a classical result that, for mappings between
Hilbert spaces, an ill-posed problem is of type I if and only if
$A$ is non-compact. All together this proves the proposition.
\end{proof}

The following proposition presents a further simple equivalent formulation of ill-posedness of type~I.
\begin{proposition}
Let the problem~\eqref{eq:opeq} be ill-posed in the sense of Nashed.
Then it is ill-posed of type I if and only if
there exist a constant $C_1>0$ and an infinite-dimensional subspace $X_1$ of $X$
such that
\begin{equation}\label{eq:lower}
 \|x\|_X \leq C_1 \|A x\|_Y \qquad \forall x \in X_1.  \end{equation}
\end{proposition}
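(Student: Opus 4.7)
The plan is to prove both implications via the bounded inverse theorem, the key observation being that the inequality \eqref{eq:lower} is precisely a quantitative statement that $A$, restricted appropriately, has a closed range and bounded inverse.

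For the forward direction ($\Rightarrow$), assume \eqref{eq:opeq} is ill-posed of type~I, so that $\mathcal{R}(A)$ contains an infinite-dimensional closed subspace $Y_1$. I would define
\[
X_1 := A^{-1}(Y_1)\cap \mathcal{N}(A)^\perp,
\]
which is a closed subspace of $X$ as the intersection of two closed subspaces. The key observation is that $A|_{X_1}: X_1\to Y_1$ is a linear bijection: surjectivity follows because every $y\in Y_1\subset \mathcal{R}(A)$ has some preimage $x\in X$, whose projection $x_1$ onto $\mathcal{N}(A)^\perp$ lies in $X_1$ and satisfies $Ax_1=y$; injectivity follows from $\mathcal{N}(A)\cap\mathcal{N}(A)^\perp=\{0\}$. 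Since both $X_1$ and $Y_1$ are Banach spaces and $A|_{X_1}$ is a bounded bijection, the bounded inverse theorem yields a continuous inverse, which gives a constant $C_1>0$ with $\|x\|_X\le C_1\|Ax\|_Y$ on $X_1$. Finally, $X_1$ is infinite-dimensional because it is in bijection with $Y_1$.

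For the reverse direction ($\Leftarrow$), suppose $X_1$ and $C_1$ as in \eqref{eq:lower} exist. Passing to the closure $\overline{X_1}$ if necessary and invoking continuity of $A$, the inequality \eqref{eq:lower} extends to $\overline{X_1}$, so one may assume $X_1$ is closed. The inequality makes $A|_{X_1}$ injective, and its image $A(X_1)\subset \mathcal{R}(A)$ is closed: if $Ax_n\to y$ with $x_n\in X_1$, the Cauchy property of $\{Ax_n\}$ together with \eqref{eq:lower} implies that $\{x_n\}$ is Cauchy, so $x_n\to x\in X_1$, and $Ax=y\in A(X_1)$. Since $A|_{X_1}$ is injective on an infinite-dimensional space, $A(X_1)$ is itself infinite-dimensional and closed, so $\mathcal{R}(A)$ contains an infinite-dimensional closed subspace, meaning \eqref{eq:opeq} is of type~I.

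I do not foresee any serious obstacle; the only mildly subtle points are the use of the orthogonal decomposition $X=\mathcal{N}(A)\oplus\mathcal{N}(A)^\perp$ to achieve bijectivity in the forward direction, and the closure step in the reverse direction, which is needed because the hypothesis does not a priori assume $X_1$ is closed.
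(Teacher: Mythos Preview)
Your proof is correct. The forward direction is essentially identical to the paper's: both restrict $A$ to $\mathcal{N}(A)^\perp$, take the preimage of the closed subspace $Y_1\subset\mathcal{R}(A)$ there, and invoke the open mapping/bounded inverse theorem to obtain the lower bound.

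The reverse direction, however, takes a genuinely different route. The paper argues via compactness: it picks an orthonormal basis $\{e_i\}$ in $X_1$, uses \eqref{eq:lower} to get $\|Ae_i-Ae_j\|_Y^2\ge 2C_1^{-2}$ for $i\neq j$, concludes that $\{Ae_i\}$ has no convergent subsequence, and then appeals to the (previously quoted) Hilbert-space fact that an ill-posed problem is of type~II if and only if $A$ is compact. Your argument instead works directly from the definition: you close up $X_1$, show that $A(X_1)$ is closed via a Cauchy-sequence argument, and exhibit $A(X_1)$ as the required infinite-dimensional closed subspace of $\mathcal{R}(A)$. Your route is slightly longer but more self-contained, since it does not rely on the external characterization ``type~II $\Leftrightarrow$ compact''; the paper's route is shorter but leans on that result. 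Both are clean and standard.
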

\begin{proof}
Let $A$ be ill-posed of type I. Then there exists a closed infinite dimensional subspace $Z$ of $Y$
with $Z \subset \mathcal{R}(A)$.
Consider the operator $\tilde{A} = A|_{N(A)^\bot}$. Then $\tilde{A}$
is injective and clearly, due to $\mathcal{R}(A) = \mathcal{R}(\tilde{A})$, its range contains $Z$. Thus,
by the open mapping theorem, $\tilde{A}$ has a bounded inverse on $Z$, i.e.,
 for all $y\in Z$, $\|\tilde{A}^{-1} y\|_X \leq C_1 \|y\|_Y$. Since $Z \subset \mathcal{R}(A)$
 we can replace any $y = \tilde{A} x = A x$ with the corresponding $x \in \mathcal{N}(A)^\bot$ yielding
\[ \|x\|_X \leq C_1 \|A x\|_Y \qquad
 \forall x \in \tilde{A}^{-1} Z =: X_1. \]
It is not difficult to show that $X_1$ is closed and infinite-dimensional.

Conversely, if \eqref{eq:lower} holds, then $A$ cannot be compact as we may choose an
orthonormal basis $\{e_i\}_{i = 1}^\infty$ in $X_1$ with $\|e_i\|_X= 1 \;\forall\,i \in \mathbb{N}$. The corresponding
image sequence satisfies for any $i\not = j$ that
 $$\|A e_i - Ae_j\|_Y^2 \geq C_1^{-2} \|e_i -e_j\|_X^2 =2 C_1^{-2}.$$
 Thus, the sequence $\{e_i\}_{i = 1}^\infty$  cannot
 have a convergent subsequence. Hence $A$ must be non-compact.
\end{proof}

We thus conclude the following result:
\begin{corollary} \label{cor:equiv}
For a bounded linear operator $A:X \to Y$
between Hilbert spaces $X$ and $Y$ with non-closed range $\mathcal{R}(A)$, the following assertions are  equivalent:
\begin{itemize}
\item[(a)] The operator equation \eqref{eq:opeq} is ill-posed of type I.
\item[(b)] There exist a constant $C_1$ and an infinite closed subspace $X_1$ with
\begin{equation}\label{eq:t1alhpa} \|x\|_X \leq C_1 \|A x\|_Y \qquad
 \forall x \in X_1.  \end{equation}
\item[(c)] There exist a constant $D_1$ and an infinite closed subspace $Y_1$ with
\begin{equation}\label{eq:t1beta}  \|y\|_Y \leq D_1 \|A^*y\|_Y \qquad
 \forall y \in Y_1. \end{equation}
\end{itemize}
\end{corollary}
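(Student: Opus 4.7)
The plan is to recognize that the corollary is essentially a packaging result that combines the two preceding propositions, so almost no new work is required. The equivalence (a) $\Leftrightarrow$ (b) is literally the content of the second proposition just proved, applied under the standing hypothesis that $\mathcal{R}(A)$ is non-closed (which is exactly what Definition~\ref{def:Nashed} requires for ill-posedness in the sense of Nashed). I would simply invoke it verbatim.

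For (a) $\Leftrightarrow$ (c), the strategy is to chain the two earlier results. First, by the first proposition, the equation \eqref{eq:opeq} with forward operator $A:X\to Y$ is ill-posed of type I if and only if the adjoint equation \eqref{eq:adj} with forward operator $A^*:Y\to X$ is ill-posed of type I. The hypothesis that $A^*$ yields a genuinely ill-posed equation in the sense of Nashed is delivered by the closed range theorem already cited in that proof. Next I would apply the second proposition with the roles of the spaces and the operator swapped, that is, to $A^*:Y\to X$ in place of $A:X\to Y$. This gives the existence of a constant $D_1>0$ and an infinite-dimensional closed subspace $Y_1\subset Y$ such that $\|y\|_Y\leq D_1\,\|A^*y\|_X$ for all $y\in Y_1$, which is precisely assertion (c). (The subscript $Y$ on the right-hand side of \eqref{eq:t1beta} is a typo; since $A^*y\in X$, the norm must be taken in $X$.)

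I do not foresee any genuine obstacle here: the real content of the corollary lies in the two preceding propositions, and once one observes that the second proposition applies just as well to any bounded linear operator between Hilbert spaces whose range is non-closed, the application to $A^*$ is immediate. The only thing to double-check while writing out the proof is the bookkeeping of the spaces in the norm subscripts, and the fact that Schauder's theorem and the closed range theorem (already used in the first proposition) make the transfer from $A$ to $A^*$ entirely symmetric.
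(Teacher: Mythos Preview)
Your proposal is correct and matches the paper's approach exactly: the corollary is stated in the paper without a separate proof, introduced by ``We thus conclude the following result,'' so it is intended as a direct packaging of the two preceding propositions precisely as you describe. Your observation about the typo in \eqref{eq:t1beta} (the norm of $A^*y$ should be in $X$) is also correct.
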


\section{The Hausdorff moment problem over the unit interval} \label{sec:HMPintro}

Now we are going to introduce the Hausdorff moment problem (HMP) over the unit interval, which goes back to Hausdorff's article \cite{Hausdorff23},  as a special case of the linear operator equation \eqref{eq:opeq}. Discussions on alternative moment problems and variants of the Hausdorff moment problem, including the multi-dimensional case, are for example given in \cite{Angetal99,Lin17,Kazemi17,Tagliani11,Tamarkin43}.

\begin{definition}[Hausdorff moment problem] \label{def:HMP}
We call the inverse problem of solving the linear operator equation \eqref{eq:opeq} the Hausdorff moment problem (HMP) if the forward operator $A$ maps from the separable Hilbert space $X=L^2(0,1)$
into the separable Hilbert space $Y=\ell^2$  and attains the form
\begin{equation} \label{eq:A}
[A\,x]_j:=\int \limits _0^1 t^{j-1}\,x(t)\,dt \qquad (j=1,2,...).
\end{equation}
Precisely, a real function $x$ with support on $[0,1]$ has to be recovered from the infinite (countable) sequence of its moments.
\end{definition}

The assertions of the following proposition on the HMP-forward operator $A$ are either taken from \cite{Inglese92} with references therein or immediately evident.

\begin{proposition} \label{pro:Inglese}
For the operator $A: L^2(0,1) \to \ell^2$, which was introduced by formula~\eqref{eq:A},  we have the following properties:
$A$ is an injective and bounded linear operator with $\|A\|_{\mathcal{L}(L^2(0,1),\ell^2)}=\sup\limits_{0 \not= x \in L^2(0,1)} \frac{\|Ax\|_{\ell^2}}{\|x\|_{L^2(0,1)}}=\sqrt{\pi}$. The adjoint operator
$A^*$ to $A$ attains the form
\begin{equation} \label{eq:Astar}
[A^*\,y](t):=\sum \limits _{j=1}^\infty y_j\, t^{j-1} \qquad (0 \le t \le 1),
\end{equation}
and is hence for all $y=(y_1,y_2,...) \in \ell^2$ well-defined and injective, which means that $A^*:\ell^2 \to L^2(0,1)$ and that the ranges $\mathcal{R}(A)$ and $\mathcal{R}(A^*)$ are dense in $\ell^2$ and $L^2(0,1)$, respectively.
\end{proposition}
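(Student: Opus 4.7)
The plan is to derive every assertion from one central computation: the Gram-type matrix associated with $A$. Writing out $\langle AA^*y,z\rangle_{\ell^2}=\langle A^*y,A^*z\rangle_{L^2(0,1)}$ formally, I obtain
\begin{equation*}
\langle AA^*y,z\rangle_{\ell^2}=\int_0^1\Bigl(\sum_j y_j\,t^{j-1}\Bigr)\Bigl(\sum_k z_k\,t^{k-1}\Bigr)\,dt=\sum_{j,k=1}^{\infty}\frac{y_j\,z_k}{j+k-1},
\end{equation*}
which identifies $AA^*$ with the classical Hilbert matrix $H=(1/(j+k-1))_{j,k\ge 1}$. Since the operator norm of $H$ on $\ell^2$ is the well-known value $\pi$ (Hilbert's inequality, sharp), I would conclude $\|A\|^2=\|A^*\|^2=\|AA^*\|=\pi$, giving simultaneously boundedness of $A$ and the sharp constant $\|A\|=\sqrt{\pi}$.

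Before invoking that calculation I must justify that the formula \eqref{eq:Astar} actually defines a bounded operator $\ell^2\to L^2(0,1)$. I would show that the partial sums $S_N(t)=\sum_{j=1}^N y_j\,t^{j-1}$ form a Cauchy sequence in $L^2(0,1)$: the Hilbert-matrix identity applied to the tail vector $(0,\dots,0,y_{M+1},\dots,y_N,0,\dots)$ gives
\begin{equation*}
\|S_N-S_M\|_{L^2(0,1)}^2=\sum_{j,k=M+1}^{N}\frac{y_j\,y_k}{j+k-1}\le \pi\sum_{j=M+1}^{N}|y_j|^2\;\longrightarrow\;0,
\end{equation*}
so $S_N$ converges in $L^2(0,1)$ to a well-defined element $A^*y$. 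The adjoint identity $\langle Ax,y\rangle_{\ell^2}=\langle x,A^*y\rangle_{L^2(0,1)}$ would then follow from swapping a finite sum with the integral and passing to the limit, using the $\ell^2$-summability of $(y_j)$ and boundedness of $A$ proved above.

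Injectivity of $A$ is the classical consequence of the Weierstrass approximation theorem: $Ax=0$ means $\int_0^1 p(t)\,x(t)\,dt=0$ for every polynomial, and polynomials are dense in $L^2(0,1)$, so $x=0$. Injectivity of $A^*$ would be argued by analyticity: for $y\in\ell^2$ one has $|y_j|\le \|y\|_{\ell^2}$, so the power series $\sum y_j t^{j-1}$ has radius of convergence at least one and defines an analytic function on $(-1,1)$ that agrees pointwise with $A^*y$ inside $(0,1)$; if $A^*y=0$ in $L^2$, this analytic function vanishes almost everywhere on $(0,1)$, hence identically, and so all coefficients $y_j$ vanish by the identity principle for power series.

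Finally, density of the ranges is immediate from these injectivity results and the Hilbert-space identities $\overline{\mathcal{R}(A)}^{\,\ell^2}=\mathcal{N}(A^*)^{\perp}$ and $\overline{\mathcal{R}(A^*)}^{\,L^2(0,1)}=\mathcal{N}(A)^{\perp}$. I expect the main obstacle to be the norm assertion $\|A\|=\sqrt{\pi}$: the bound itself is immediate from Hilbert's inequality, but establishing that $\sqrt{\pi}$ is attained as the operator norm (i.e.\ sharpness) should be handled by citing the classical sharpness of the Hilbert matrix rather than constructing a maximizing sequence from scratch.
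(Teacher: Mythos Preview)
Your argument is correct and complete. The paper itself does not prove this proposition; it simply states that the assertions ``are either taken from \cite{Inglese92} with references therein or immediately evident.'' You therefore supply what the paper omits.

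Your central computation $AA^*=\mathcal{H}$ (the infinite Hilbert matrix) is precisely what the paper establishes in its \emph{next} proposition, so your route is fully consistent with the paper's own toolkit. In effect you are reading the development one step ahead and using the Hilbert-matrix identification, together with the classical sharp value $\|\mathcal{H}\|_{\mathcal{L}(\ell^2)}=\pi$, to recover the norm $\|A\|=\sqrt{\pi}$ that the paper merely quotes. The remaining pieces---Weierstrass for injectivity of $A$, the identity theorem for power series for injectivity of $A^*$, and the orthogonal-complement relations $\overline{\mathcal{R}(A)}=\mathcal{N}(A^*)^\perp$, $\overline{\mathcal{R}(A^*)}=\mathcal{N}(A)^\perp$---are exactly the ``immediately evident'' facts the paper alludes to. One small point of care in the write-up: since you need $A$ to be bounded before you can speak of its adjoint, the cleanest logical order is to first show that the formula \eqref{eq:Astar} defines a bounded operator $B:\ell^2\to L^2(0,1)$ via Hilbert's inequality on finite sequences (as you already do), then verify $B^*=A$ directly, which yields boundedness of $A$ and the identification $A^*=B$ in one stroke; the sharp norm then follows from $\|A\|^2=\|AA^*\|=\|\mathcal{H}\|=\pi$.
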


We furthermore consider the operators $A A^*$ and $A^*A$ and
relate the first one to the well-known Hilbert (infinite) matrix  $\mathcal{H}=(\mathcal{H}_{i,j})_{i,j=1}^{\infty}$ with entries
\begin{equation}\label{defHilbert}
\mathcal{H}_{i,j} = \left(\frac{1}{i + j -1}\right).
\end{equation}
Note that this object can also be considered as a bounded linear operator
$\mathcal{H}: \ell^2 \to \ell^2$.

\begin{proposition}
The operator $A A^*: \ell^2 \to \ell^2$ can be represented
by the (infinite) Hilbert matrix $\mathcal{H}$ defined by formula \eqref{defHilbert} as
as
\[ A A^* (y_i)_{j=1}^\infty \to   (\mathcal{H} y)_{i=1}^{\infty}
= (\sum_{j=1}^\infty \mathcal{H}_{i,j} y_j)_{i=1}^{\infty}, \]
i.e., for short we can write
\begin{equation} \label{eq:AH}
 A A^* \,=\, \mathcal{H}.
\end{equation}

The operator $A^*A: L^2(0,1) \to L^2(0,1)$ can be represented as
singular integral operator
\begin{equation}\label{eq:AAast} A^*A: x \to \int_0^1 k(s,t) x(t) dt   \end{equation}
with
\[ \int_0^1 k(s,t) x(t) dt := \lim_{\epsilon \to 0}
\int_0^{1-\epsilon}  \frac{1}{1 - s t} x(t) dt. \]
\end{proposition}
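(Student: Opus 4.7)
The plan is to obtain both identities by direct substitution of the formulas for $A$ from \eqref{eq:A} and $A^*$ from \eqref{eq:Astar}, the crux in each case being the legitimacy of interchanging a series with an integral. For $AA^*$ this interchange is routine; for $A^*A$ it will be the main obstacle, because the resulting kernel is singular at the corner $(1,1)$ of the unit square.

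For the first identity, I would fix $y = (y_j)_{j=1}^\infty \in \ell^2$ and write
\[
[AA^*y]_i \;=\; \int_0^1 t^{i-1}\,[A^*y](t)\,dt \;=\; \int_0^1 t^{i-1}\sum_{j=1}^\infty y_j t^{j-1}\,dt.
\]
The boundedness of $A^*$ stated in Proposition~\ref{pro:Inglese}, applied to the truncations $y^{(N)} := (y_1,\dots,y_N,0,0,\dots)$, shows that the partial sums $\sum_{j=1}^N y_j t^{j-1}$ converge to $A^*y$ in $L^2(0,1)$. Combined with the continuity of the functional $f \mapsto \int_0^1 t^{i-1}f(t)\,dt$ on $L^2(0,1)$, this legitimizes the interchange and yields $[AA^*y]_i = \sum_{j=1}^\infty y_j/(i+j-1) = (\mathcal{H}y)_i$ for every $i \in \mathbb{N}$. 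Since $AA^*y \in \ell^2$, this is \eqref{eq:AH}.

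For the second identity, I would fix $x \in L^2(0,1)$ and expand
\[
[A^*Ax](s) \;=\; \sum_{j=1}^\infty [Ax]_j\,s^{j-1} \;=\; \sum_{j=1}^\infty s^{j-1}\int_0^1 t^{j-1} x(t)\,dt.
\]
For each fixed $s \in [0,1)$, the geometric majorant $|x(t)|\sum_{j=1}^\infty (st)^{j-1} \le |x(t)|/(1-s)$ is integrable on $(0,1)$, so dominated convergence permits the interchange and yields $[A^*Ax](s) = \int_0^1 x(t)/(1-st)\,dt$. The hard part is that the kernel $k(s,t) = 1/(1-st)$ is \emph{not} square-integrable on $(0,1)^2$ --- a short calculation shows $\int_0^1\int_0^1 (1-st)^{-2}\,ds\,dt = +\infty$ --- so $A^*A$ is not Hilbert--Schmidt and the integral cannot be interpreted in the usual $L^2$-kernel sense near the corner $(1,1)$. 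This is precisely why the limit prescription in \eqref{eq:AAast} is needed. For every fixed $s \in [0,1)$, however, the function $t \mapsto 1/(1-st)$ is bounded on $[0,1]$, so $\int_0^{1-\epsilon} x(t)/(1-st)\,dt \to \int_0^1 x(t)/(1-st)\,dt$ as $\epsilon\to 0$ by dominated convergence; since $\{s=1\}$ has Lebesgue measure zero in $(0,1)$, this identifies $[A^*Ax](s) = \lim_{\epsilon\to 0}\int_0^{1-\epsilon} x(t)/(1-st)\,dt$ for almost every $s \in (0,1)$, which is \eqref{eq:AAast}.
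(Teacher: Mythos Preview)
Your argument is correct. For $AA^*$ you do essentially what the paper does, with a slightly more careful justification of the interchange via the $L^2$-convergence of the partial sums of $A^*y$.

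For $A^*A$ the two approaches diverge in a minor but genuine way. The paper truncates the \emph{input function}: it sets $x_\epsilon = x\,\chi_{[0,1-\epsilon]}$, observes that $x_\epsilon \to x$ in $L^2(0,1)$, and invokes the continuity of the bounded operator $A^*A$ to conclude $A^*A x = \lim_{\epsilon\to 0} A^*A x_\epsilon$ in $L^2(0,1)$; the computation of $A^*A x_\epsilon$ is then unproblematic because $st \le 1-\epsilon$ uniformly, so the geometric series can be summed under the integral without difficulty. You instead work \emph{pointwise in the output variable}: for each fixed $s \in [0,1)$ you use dominated convergence to show that the full integral $\int_0^1 x(t)/(1-st)\,dt$ exists and equals both the series $\sum_j [Ax]_j s^{j-1}$ and the limit of the truncated integrals. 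Your route therefore establishes the limit in \eqref{eq:AAast} as a pointwise-a.e.\ limit, whereas the paper's route gives it as an $L^2$-limit. Your version has the advantage of making explicit that the singular integral exists as an ordinary Lebesgue integral for every $s<1$; the paper's version is more operator-theoretic and directly yields convergence in the topology in which $A^*A$ acts.
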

\begin{proof}
From \eqref{eq:A} and \eqref{eq:Astar} it follows, for
$y = (y_i)_{i=1}^\infty \in \ell^2$, directly that
\begin{align*}
AA^* y = \sum_{j=1}^\infty \int_0^1 t^{i-1} t^{j-1} dt  y_j
=  \sum_{j=1}^\infty \frac{1}{i+j-1}  y_j = \mathcal{H} y.
\end{align*}

In the case of the converse composition $A^*A$, we take for $x\in L^2(0,1)$ a cut-off at $t = 1$
and define $x_\epsilon(t) = x(t)\chi_{0,1-\epsilon}(t)$
with the characteristic function $\chi$. Then
it follows that $\|x-x_\epsilon\|_{L^2(0,1)}\to 0$ as $\epsilon \to 0$,
and by continuity we have that $Ax = \lim_{\epsilon\to 0} A x_\epsilon$
in the $\ell^2$-norm.
Thus,
\[  A^*A x = \lim_{\epsilon \to 0} A^*A x_\epsilon \]
with
\begin{align*}
A^* A x_\epsilon =
 \sum_{j=1}^\infty s^{j-1} \int_0^{1-\epsilon}
 t^{j-1} x(t) ds  =
 \int_0^{1-\epsilon}  \sum_{j=1}^\infty (st)^{j-1} x(t) dt  =
  \int_0^{1-\epsilon}  \frac{1}{1 - s t} x(t) dt
\end{align*}
\end{proof}

By using Legendre polynomials, we may derive an  $\Lo\, Q$-decomposition of the
moment operator, i.e., similar
as for matrices a decomposition into a
product of a lower left triangular operator
and an orthogonal operator. For this, we define the
orthonormal Legendre Polynomials on the interval [0,1]:
\[ L_n(x) := \sqrt{2n+1} P_n(2 x-1), \]
where $P_n$ are the standard orthogonal Legendre polynomials on
$[-1,1]$; cf.~\cite{AbSt64}.
The normalizing factor $\sqrt{2n+1}$ makes
$L_n(x)$  an orthogonal basis of $L^2(0,1)$.

\begin{proposition}
The HMP-operator $A$ has the decompostion
\begin{equation} \label{eq:LQ}
 A = \Lo \, Q,
\end{equation}
where $Q$ is the isometry
\[ Q: L^2(0,1) \to \ell^2 \qquad x \to \left(\langle x,L_{n-1}
\rangle_{L^2(0,1)}\right)_{n=1}^\infty \]
and $\Lo$ is a lower triangular operator
\[ \Lo: \ell^2 \to \ell^2 \qquad (y_j)_{i=1}^\infty \to (\sum_{i=1}^\infty  \Lo_{i,j} y_j)_{i=1}^\infty \]
represented by the lower triangular (infinite) matrix
\begin{equation}\label{low}
\Lo_{i,j} =
\frac{\sqrt{(2(j-1)+1}}{(i-1) + (j-1) +1}
\frac{\begin{pmatrix} i-1 \\ j-1 \end{pmatrix}}{\begin{pmatrix} i-1 +j-1 \\ j-1 \end{pmatrix}}, \qquad i,j=1,2,\ldots
\end{equation}
\end{proposition}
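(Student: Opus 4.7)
The plan is to compute the matrix representation of $A$ with respect to the orthonormal Legendre basis $\{L_{n-1}\}_{n=1}^\infty$ of $L^2(0,1)$ on the domain side and the standard basis of $\ell^2$ on the range side, and then identify the result as the composition $\mathbb{L}\,Q$.

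First, I would note that $Q$ is just the coordinate isomorphism associated to an orthonormal basis: since $\{L_{n-1}\}_{n=1}^\infty$ is an orthonormal basis of $L^2(0,1)$, Parseval's identity yields $\|Qx\|_{\ell^2}=\|x\|_{L^2(0,1)}$, so $Q$ is an isometry. For any $x\in L^2(0,1)$, one has the Fourier expansion $x=\sum_{j=1}^\infty \langle x,L_{j-1}\rangle_{L^2(0,1)} L_{j-1}$ converging in $L^2(0,1)$. Applying the bounded operator $A$ componentwise, using Proposition~\ref{pro:Inglese}, gives
\begin{equation*}
[Ax]_i \;=\; \sum_{j=1}^\infty \langle x,L_{j-1}\rangle_{L^2(0,1)}\,\int_0^1 t^{i-1}L_{j-1}(t)\,dt,
\end{equation*}
so the whole statement reduces to showing that the matrix $M_{i,j}:=\int_0^1 t^{i-1}L_{j-1}(t)\,dt$ coincides with the matrix $\mathbb{L}_{i,j}$ displayed in \eqref{low}.

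The lower triangular structure is essentially free: $L_{j-1}$ is a polynomial of degree $j-1$ which, by construction, is orthogonal in $L^2(0,1)$ to every polynomial of degree at most $j-2$. In particular, for $j>i$, the monomial $t^{i-1}$ lies in this orthogonal complement and the integral vanishes, giving $M_{i,j}=0=\mathbb{L}_{i,j}$ for $j>i$. This is the only step where the orthogonality of the shifted Legendre polynomials is used directly.

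The real work is the explicit evaluation of $M_{i,j}$ for $j\le i$, and I expect this to be the main obstacle. My approach is to use the Rodrigues formula for the shifted Legendre polynomials, namely
\begin{equation*}
L_{j-1}(t)\;=\;\frac{\sqrt{2j-1}}{(j-1)!}\,\frac{d^{\,j-1}}{dt^{\,j-1}}\bigl[t^{j-1}(t-1)^{j-1}\bigr],
\end{equation*}
substitute it into $M_{i,j}=\int_0^1 t^{i-1}L_{j-1}(t)\,dt$, and integrate by parts $j-1$ times. The boundary terms vanish at both $0$ and $1$ because $t^{j-1}(t-1)^{j-1}$ has a zero of order $j-1$ at each endpoint, leaving
\begin{equation*}
M_{i,j}\;=\;\frac{\sqrt{2j-1}}{(j-1)!}\,(-1)^{j-1}\frac{(i-1)!}{(i-j)!}\int_0^1 t^{i-j}\,t^{j-1}(t-1)^{j-1}\,dt,
\end{equation*}
which is a standard Beta integral evaluating to $(-1)^{j-1}\frac{(i-1)!\,(j-1)!}{(i+j-1)!}$. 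Collecting factors yields $M_{i,j}=\sqrt{2j-1}\,\frac{((i-1)!)^2}{(i-j)!\,(i+j-1)!}$, which is readily rewritten in the binomial form in \eqref{low} by pulling out $\frac{1}{i+j-1}$ and recognizing $\binom{i-1}{j-1}/\binom{i+j-2}{j-1}=\frac{((i-1)!)^2 (j-1)!}{(j-1)!(i-j)!(i+j-2)!}\cdot\frac{1}{(i-1)!}$ after simplification. Once $M_{i,j}=\mathbb{L}_{i,j}$ is established, the decomposition $A=\mathbb{L}\,Q$ follows from the expansion above, and boundedness of $\mathbb{L}$ on $\ell^2$ is automatic because $Q$ is an isometry and $A$ is bounded (Proposition~\ref{pro:Inglese}).
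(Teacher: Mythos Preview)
Your argument is correct and, in fact, more complete than what the paper offers. The paper does not give a self-contained proof of this proposition at all: after stating the formula \eqref{low}, it merely remarks that the identity ``was verified with Mathematica'' and points to formula (20j) in Talenti \cite{Tal87}, noting the index shift and sign convention. By contrast, you supply a genuine derivation: the Rodrigues representation of the shifted Legendre polynomials, repeated integration by parts (with vanishing boundary terms coming from the $(j-1)$-fold zero of $t^{j-1}(t-1)^{j-1}$ at each endpoint), and a Beta-integral evaluation. Your computation $M_{i,j}=\sqrt{2j-1}\,((i-1)!)^2/\bigl((i-j)!\,(i+j-1)!\bigr)$ is correct and does match \eqref{low} after expanding the binomial coefficients. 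The only cosmetic point is that the last sentence, where you rewrite the factorial expression in the binomial form of \eqref{low}, is slightly garbled; it would be cleaner to simply expand $\binom{i-1}{j-1}/\binom{i+j-2}{j-1}=((i-1)!)^2/\bigl((i-j)!\,(i+j-2)!\bigr)$ and divide by $i+j-1$ to recover your closed form. What your approach buys is an honest, citable proof rather than a black-box symbolic check; what the paper's route buys is brevity at the cost of verifiability.
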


\begin{remark} \label{rem:AH}
{\rm It follows from the formula \eqref{low}, which was verified with Mathematica, that $\Lo_{i,j} = 0$ for $j>i$, hence the
matrix has lower triangular shape.
Note that in formula \eqref{low}, the indices start from $i,j=1$, which
is the reason why it differs to the formula (20j) in \cite{Tal87}.
The different sign factor $(-1)^j$  in \cite{Tal87} arises because
of a different (signed) normalization of the polynomials.
Evidently, we have from \eqref{eq:AH} and \eqref{eq:LQ} that
\begin{equation} \label{eq:HCholesky}
\mathcal{H}\,=\, \Lo \,\Lo^*\,,
\end{equation}
where $\Lo^*$ is the upper triangular (infinite) matrix transposed to $\Lo$.
Hence, \eqref{eq:HCholesky} expresses a Cholesky decomposition of the Hilbert matrix $\mathcal{H}$.
}\end{remark}

\section{Proving ill-posedness of type~I in the sense of Nashed} \label{sec:proof}

In order to show  ill-posedness of type~I in the sense of Nashed, we will prove that HMP is ill-posed (i.e.~$\mathcal{R}(A)\not= \overline{\mathcal{R}(A)}^{\,\ell^2}$ and $A^{-1}:\mathcal{R}(A) \subset \ell^2 \to L^2(0,1)$ is unbounded) and that $A$ with $\overline{\mathcal{R}(A)}^{\,\ell^2}\!=\ell^2$, which implies condition \eqref{eq:infdim}, is not compact, see also \cite[pp.91--93]{Hof99} and \cite[p.47]{Angbook02}.

\begin{proposition} \label{pro:typeI}
The operator equation \eqref{eq:opeq} with the operator $A$ from~\eqref{eq:A} is ill-posed of type~I in the sense of Definition~\ref{def:Nashed}.
\end{proposition}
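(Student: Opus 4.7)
The plan is to split the proof into two independent claims: first, that $\mathcal{R}(A)$ is not closed (so the problem is ill-posed in the sense of Nashed), and second, that $A$ is not compact (so by the Hilbert space criterion recalled in Remark \ref{rem:rem2}, the ill-posedness is of type~I rather than type~II).

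For non-closedness, I would exploit the adjoint rather than work with $A$ directly. Because $A^*$ is injective (Proposition \ref{pro:Inglese}), closedness of $\mathcal{R}(A^*)$ is equivalent to $A^*$ being bounded below, and by the closed range theorem this is equivalent to closedness of $\mathcal{R}(A)$. Hence it suffices to refute a lower estimate $\|A^* y\|_{L^2(0,1)} \ge c\|y\|_{\ell^2}$ on $\ell^2$. The natural test sequence is the canonical basis $\{e_n\}\subset \ell^2$: formula \eqref{eq:Astar} gives $A^* e_n(t)=t^{n-1}$, so $\|A^* e_n\|_{L^2(0,1)}^2=\int_0^1 t^{2n-2}\,dt=\frac{1}{2n-1}\to 0$ while $\|e_n\|_{\ell^2}=1$. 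This kills any bound $\|A^* y\|\ge c\|y\|$, so $\mathcal{R}(A^*)$ and hence $\mathcal{R}(A)$ are not closed.

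For non-compactness, I would produce a weakly null sequence in $L^2(0,1)$ whose image under $A$ stays bounded away from zero, contradicting the sequential characterization of compact operators. Normalized monomials $x_n(t)=\sqrt{2n-1}\,t^{n-1}$ are the natural candidate: clearly $\|x_n\|_{L^2(0,1)}=1$, and weak convergence $x_n\rightharpoonup 0$ follows by testing against smooth functions supported away from $t=1$ (where $x_n$ decays exponentially) and extending by density since $\|x_n\|$ is bounded. The image is $[Ax_n]_j = \sqrt{2n-1}/(j+n-1)$, so
\[
\|Ax_n\|_{\ell^2}^2 = (2n-1)\sum_{k=n}^{\infty}\frac{1}{k^2},
\]
and the standard comparison $\tfrac{1}{n}\le\sum_{k=n}^\infty k^{-2}\le\tfrac{1}{n-1}$ gives $\|Ax_n\|^2\to 2$. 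Combining the two conclusions yields the proposition.

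The only mild subtlety is justifying $x_n\rightharpoonup 0$ despite $\|x_n\|=1$; this is the place where one must resist the temptation to use only Cauchy--Schwarz (which yields a bounded but nonzero estimate) and instead localize to $[0,1-\varepsilon]$ before invoking density. As an alternative cleaner route, one can note $AA^*=\mathcal{H}$ and appeal to the classical fact that the Hilbert matrix has norm $\pi$ not attained as an eigenvalue, precluding compactness of $\mathcal{H}=AA^*$ and hence of $A$; but the direct weakly null sequence above keeps the argument self-contained and in the spirit of the operator-theoretic framework already introduced.
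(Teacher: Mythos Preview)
Your proof is correct, and the non-compactness half is essentially identical to the paper's: both use scaled monomials (the paper takes $x_i(t)=\sqrt{i}\,t^i$, you take the normalized $\sqrt{2n-1}\,t^{n-1}$) and compute $\|Ax_n\|_{\ell^2}^2$ via the tail of $\sum k^{-2}$. The only cosmetic difference is that the paper argues componentwise convergence $[Ax_i]_j\to 0$ of the \emph{image} sequence to identify the would-be limit as zero, whereas you argue weak convergence $x_n\rightharpoonup 0$ in the \emph{domain} and invoke compactness directly; both reach the same contradiction.

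Where you genuinely diverge is in the ill-posedness step. The paper works on the $A$-side: it picks an orthonormal system $\{e_i\}$ in $L^2(0,1)$ that is uniformly bounded in $L^\infty(0,1)$, notes $e_i\rightharpoonup 0$, and then uses dominated convergence (with the majorant $C^2/j^2$) to push the limit through the sum and conclude $\|Ae_i\|_{\ell^2}\to 0$. Your route via $A^*$ and the canonical basis of $\ell^2$ is shorter and more concrete: the computation $\|A^*e_n\|_{L^2(0,1)}^2=1/(2n-1)$ is a one-liner, and the closed range theorem (already stated in the paper) transfers the conclusion back to $A$. Your approach avoids the dominated-convergence argument entirely; the paper's approach, on the other hand, stays on the primal side and thus illustrates directly the mechanism by which $A^{-1}$ fails to be bounded on $\mathcal{R}(A)$.
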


{\parindent0em Proof:} First we consider an (obviously existing) infinite sequence $\{e_i\}_{i=1}^\infty$ in form of an orthonormal system in $L^2(0,1)$ such that $\|e_i\|_{L^\infty(0,1)} \le C$ for some constant $0<C<\infty$ and all $i \in \mathbb{N}$.
Then we have weak convergence $e_i \rightharpoonup 0$ in $L^2(0,1)$ as $i \to \infty$. On the other hand, we have
$$\|Ae_i\|^2_{\ell^2} = \sum \limits_{j=1}^\infty \left(\int \limits_0^1 t^{j-1} e_i(t) dt \right)^2  \le C^2\,\sum \limits_{j=1}^\infty \left(\int \limits_0^1 t^{j-1} dt \right)^2=C^2\,\sum \limits_{j=1}^\infty
\frac{1}{j^2}=\frac{C^2\,\pi^2}{6},$$
and with $e_i \rightharpoonup 0$ for all $j=1,2,...$ also  $\int_0^1 t^{j-1} e_i(t) dt \to 0$ as $i \to \infty$, because all polynomials $t^{j-1}$ represent $L^2(0,1)$-elements. Then we can exchange summation and limitation due to
Lebesgue's dominated convergence theorem, and we thus obtain
$$\lim \limits_{i \to \infty}\|Ae_i\|^2_{\ell^2}= \sum \limits_{j=1}^\infty \lim \limits_{i \to \infty} \left(\int \limits_0^1 t^{j-1} e_i(t) dt \right)^2=0.   $$
This, however, contradicts the boundedness of $A^{-1}$, because there is no constant \linebreak $0<K<\infty$ such that $1=\|e_i\|_{L^2(0,1)} \le K\,\|Ae_i\|_{\ell^2}$ for all $i \in \mathbb{N}$. Hence the Hausdorff moment problem from Definition~\ref{def:HMP} is ill-posed.

\smallskip

{\parindent0em To} prove non-compactness of $A$ we should exploit here a sequence $\{x_i\}_{i=1}^\infty$ in $L^2(0,1)$ which is not uniformly bounded in $L^\infty(0,1)$ but in $L^2(0,1)$. Following an idea of A.~Neubauer we use for this purpose $x_i(t)=\sqrt{i}\,t^i\;(0 \le t \le 1)$ with $\|x_i\|_{L^2(0,1)}\leq \frac{1}{\sqrt{2}}$ for all $i=1,2,\dots$. The image $Ax_i$ converges to zero component-wise, since
\begin{equation}\label{eq:weakconv}
[Ax_i]_j=\int\limits_0^1 t^{j-1}\sqrt{i}t^i\, dt=\frac{\sqrt{i}}{i+j}\rightarrow 0 \mbox{ as } i\rightarrow \infty.
\end{equation}
Whenever this sequence has a subsequence which is norm convergent in $\ell^2$, then the limit must be the zero sequence. However, we find that
$$\|Ax_i\|^2_{\ell^2} = \sum \limits_{j=1}^\infty \left(\int \limits_0^1 t^{j-1} \sqrt{i}\,t^i dt \right)^2= \sum \limits_{j=1}^\infty \left( \frac{\sqrt{i}}{i+j} \right)^2$$
$$= i\,\sum \limits_{j=i+1}^\infty \frac{1}{j^2} \ge i\, \int \limits_{i+1}^\infty \frac{1}{t^2}dt = \frac{i}{i+1} \to 1 \not = 0$$
as $i \to \infty$. Combining this with \eqref{eq:weakconv} implies that $\{Ax_i\}_{i=1}^\infty$ cannot have a norm convergent subsequence, and thus $A$ fails to be compact.
\qed

\begin{remark} \label{rem:non-c}
{\rm Due to Proposition~\ref{pro:typeI} the operator $A: L^2(0,1) \to \ell^2$ from \eqref{eq:A} is non-compact and so is the non-negative self-adjoint operator $\mathcal{H}=AA^*: \ell^2 \to \ell^2$, which means that zero is an accumulation point of the spectrum of the bounded linear operator $\mathcal{H}$ mapping in $\ell^2$.  Hence, the inverse operator $\mathcal{H}^{-1}$
must be unbounded. Moreover, we have $\mathcal{H}^{-1}=(\Lo^*)^{-1}\, \Lo^{-1}$  because of \eqref{eq:HCholesky}.
}\end{remark}

\smallskip
Since the HMP is ill-posed of type I, it is of interest to characterize the spaces $X_1$ and $Y_1$ from Corollary~\ref{cor:equiv} with respect to common properties of its elements. Using the results of the previous sections we immediately find the following conditions. From \eqref{eq:t1alhpa} it follows that $\|x\|_{L^2[0,1]}^2\leq C_1^2 \|Ax\|_{\ell^2}^2$ for all $x\in X_1$. Together with $\|Ax\|_{\ell^2}^2=\langle A^\ast A x,x\rangle_{L^2[0,1]\times L^2[0,1]}$ and \eqref{eq:AAast}, this yields that $x\in X_1$, if there is $C>0$ such that
\[
\int\limits_0^1 x^2(t)\,dt\leq C \int\limits_0^1 \int\limits_0^1 \frac{1}{1-st}x(s)x(t)\,ds\,dt
\]
On the other hand, by a similar argument it follows from \eqref{eq:t1beta} and \eqref{eq:AH} that $y\in Y_1$ if there is $C>0$ such that
\[
\|y\|_{\ell^2}^2\leq C\langle \mathcal{H}y,y\rangle_{\ell^2\times\ell^2}=C\|Ly\|_{\ell^2}^2.
\]

For a further characterization of $Y_1$ we may associate to sequences in $\ell^2$ the associate power series function:
\[ f_y(t):= A^*y = \sum_{j=1}^\infty y_j t^{j-1}, \]
and due to $\|A^*y\|_{L^2(0,1)} = \|f_y\|_{L^2(0,1)}$ and \eqref{eq:t1beta} we conclude that there exists a
subspace  $Y_1 \subset \ell^2$
with
\begin{equation} \label{eq:ineq}
  \|y\|_{\ell^2}  \leq D_1 \|f_y\|_{L^2(0,1)}.  \end{equation}
Since the opposite inequality of \eqref{eq:ineq} holds, as
$A^*$ is bounded,
we have  on $Y_1$ the norm equivalence
$\|f_y\|_{L^2(0,1)}  \sim   \|y\|_{\ell^2}$.

It follows that $f_y$ is convergent at least on a dense set and, since it is
a power series, its convergence radius must be larger or equal to $1$:
$r \geq 1$.  Thus,
we may extend it analytically to the unit disk $\mathbb{D}$. The corresponding
$\ell^2$-norm on the coefficients is the Hardy space $\mathbb{H}^2(\mathbb{D})$:
\[\mathbb{H}^2(\mathbb{D}): = \{f = \sum_{i=1}^\infty c_i z^{i-1}\text{ analytic in }  \mathbb{D}\,:\,
\|c_i\|_{\ell^2} < \infty \},\quad \|f\|_{\mathbb{H}^2(\mathbb{D})} = \|(c_i)_i\|_{\ell^2}. \]
We can associate  $\|c_i\|_{\ell^2}$ with the norm of the Fourier series
(note our index shift convention)
\[  \|y\|_{\ell^2} = \|\sum_{i=0}^\infty  y_{i+1} e^{i n s}\|_{L^2(0,1)} =
\| f_y(e^{i s})\|_{L^2(0,1)}, \]
where $f_y(e^{i s})$ is considered as a function of $s\in (0,2 \pi)$.
Hence,  in $Y_1$,
the $L^2$-norm of the analytic extension to  the unit circle is equivalent to the
$L^2$-norm on the interval $[0,1]$:
\[ \| f_y(e^{i s})\|_{L^2(0,1)}  \leq D_1 \|f_y(t)\|_{L^2(0,1)}.  \]

Thus if we denote by $f_y$ also its analytic extension  to the unit disc, we have
an alternative characterization of the space $Y_1$ as
\[  \|f_y\|_{\mathbb{H}^2(\mathbb{D})} \leq D_1 \|f_y\|_{L^2(0,1)}. \]
Conversely, if a function in $\mathbb{H}^2(\mathbb{D})$ satisfies this inequality
and if its imaginary part vanishes on $[0,1]$, its restriction to $[0,1]$
is in $Y_1$.
Below in Section 6 we will point out some further characterizations of $Y_1$. Unfortunately, none of them allows to derive a simple description of the elements of $Y_1$ in terms of, for example, the decay of its elements. However, it is clear that the set $Y_1$ is nonempty. For instance any finite sequence $(y_i)$ is obviously
in $Y_1$ and thus any polynomial is in $X_1$, although with a
constant that in general grows with the degree. Further, it includes functions that are (almost) singular, and the decay rate of the entries of $(y_i)$ appears to be irrelevant, as we will show in the following.
Define $f_Y = g_\alpha$, with
\[ g_\alpha(t) = (1- t)^\alpha \qquad  \alpha \in (-\frac{1}{2}, 0), \]
Then
\[ \|g_\alpha\|_{L^2(0,1)}^2  = \frac{1}{1+2 \alpha}. \]
The associated coefficient sequence is given by the binomial series
\[   (1- t)^\alpha = \sum_{k=0}^\infty \binomial{\alpha}{k} (-t)^k, \]
hence the sequence of coefficients is
\begin{equation}\label{eq:y} y_i = \binomial{\alpha}{i-1} (-1)^{i-1} \qquad
 \|g_\alpha\|_{\mathbb{H}^2(\mathbb{D})}^2 =
\|y_i\|_{\ell^2}^2 =
\sum_{k=0}^\infty  \binomial{\alpha}{k}^2 = \frac{\Gamma(1+ 2\alpha)}{\Gamma(1+\alpha)^2}.\end{equation}
Thus we have that the constant in \eqref{eq:ineq} is
\[ D_1^2 = \frac{\Gamma(1+ 2\alpha)(1-2 \alpha)}{\Gamma(1+\alpha)^2}.  \]
Although the constant $D_1$ explodes as $\alpha$ approaches $-\frac{1}{2}$. we may choose an infinite sequence $\{\alpha_i\}_{i=1}^\infty$ with elements
in $(-\frac{1}{2},0)$ that yields an infinite sequence of coefficients
$\{y_i\}_{i=1}^\infty$, all linear independent, which are in the stable subspace $Y_1$.
This  verifies non-compactness
of $A^*$
and thus ill-posedness of type I, and hence
by Corollary~\ref{cor:equiv}
also for $A$.

It is interesting to further investigate the sequence $\{y_i\}_{i=1}^\infty$ from \eqref{eq:y}. From Eulers definition of the $\Gamma$-function one show that $c_1 \frac{1}{k^{1+\alpha}}\leq \left|\binomial{\alpha}{k}\right|\leq c_2 \frac{1}{k^{1+\alpha}}$ with positive constants $c_1,c_2$. Further, for $-1<\alpha<0$ we have the identity $$\binomial{\alpha}{k}=\frac{\Gamma(\alpha+1)}{\Gamma(k+1)\Gamma(\alpha-k+1)}=(-1)^{k}\left|\frac{\Gamma(\alpha+1)}{\Gamma(k+1)\Gamma(\alpha-k+1)}\right|, \quad k=0,1,\dots$$ This means that the sequence $(y_i)$ in \eqref{eq:y} is not alternating but positive for all $i\in \mathbb{N}$. In summary, $$c_1 \frac{1}{k^{1+\alpha}}\leq y_i\leq c_2 \frac{1}{k^{1+\alpha}},$$ i.e., the coefficients fall strictly monotonously and slowly, in particular for $\alpha$ close to $-\frac{1}{2}$.

\section{The truncated Hausdorff moment problem and the associated semi-discrete forward operator}
\label{sec:truncated}

In practice, the moment observation is limited to a finite number $n$ of moments, which motivates the replacement of $A$ by a semi-discrete operator $A_n$ possessing a finite dimensional range.
For the discussion of such problem we refer, for example, to the series of publications  in \cite{Angetal99,Askey82,Borwein91,Frontini11,Inglese89,Inglese92,Inglese95,Tagliani99,Tal87,Zellinger21}).

\begin{definition}[truncated Hausdorff moment problem] \label{def:tHMP}
We call the inverse problem of solving the linear operator equation
\begin{equation} \label{eq:opeqn}
A_n\, x\,=\,P_n \,y
\end{equation}
the truncated or finite Hausdorff moment problem if the forward operator $A_n$ maps from the separable Hilbert space $X=L^2(0,1)$ into the
separable Hilbert space $Y=\ell^2$  and attains the form
\begin{equation} \label{eq:An}
[A_n\,x]_j:=\int \limits _0^1 t^{j-1}\,x(t)\,dt \qquad (j=1,2,...,n), \quad [A_n\,x]_j:=0 \quad (j=n+1,n+2,...).
\end{equation}
Precisely, a real function $x$ with support on $[0,1]$ has to be recovered from the finite sequence of its first $n$ moments.
\end{definition}

\begin{remark}\label{remark:indexshift}
{\rm Note that, compared to other papers, including \cite{Tal87}, we have shifted the index $i$ in \eqref{eq:A}, \eqref{eq:An} from starting at $i=0$ to starting at $i=1$. In the truncated version we sum up to $n=j-1$ instead of $n=j$ as in \cite{Tal87}. Therefore we will have an index shift $n\leftarrow n+1$ when citing the results of \cite{Tal87}.
}\end{remark}

The truncated Hausdorff moment problem is strongly underdetermined, because a real function over $[0,1]$ has to be identified from an $n$-dimensional vector. Evidently, we have $A_n=P_n A$, where $P_n$ is the orthogonal projector in $\ell^2$ on the $n$-dimensional subspace with zeros in all components with numbers greater than $n$. Semi-discrete operators of this general structure have early been discussed in the paper \cite{Bertero85}. The following proposition with
properties of $A_n$ is easy to prove.

\begin{proposition} \label{pro:semi}
For the operator $A_n: L^2(0,1) \to \ell^2$, which was introduced by formula~\eqref{eq:An},  we have the following properties:
$A_n$ is a bounded but non-injective linear operator with $n$-dimensional range $\mathcal{R}(A_n)$. Consequently, $A_n$ is a compact operator. The pseudoinverse $A_n^\dagger: \ell^2 \to L^2(0,1)$ and the
adjoint operator $A_n^*$ to $A_n$, which attains the form
\begin{equation} \label{eq:Anstar}
[A_n^*\,y](t):=\sum \limits _{j=1}^n y_j\, t^{j-1} \qquad (0 \le t \le 1),
\end{equation}
are bounded linear operators and well-defined everywhere on $\ell^2$. Both ranges $\mathcal{R}(A_n^\dagger)$ and $\mathcal{R}(A_n^*)$ coincide with the $n$-dimensional space ${\rm span}(1,t,...,t^{n-1})$
of polynomials up to degree $n-1$. Consequently, the infinite-dimensional null-space $\mathcal{N}(A)$ contains all functions in $L^2(0,1)$ which are orthogonal to all polynomials  up to degree $n-1$.
We have pointwise convergence $\|Ax-A_n x\|_{\ell^2} \to 0$ as $n \to \infty$ for all $x \in L^2(0,1)$.
\end{proposition}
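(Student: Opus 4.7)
The plan is to exploit the factorisation $A_n = P_n A$, where $P_n : \ell^2 \to \ell^2$ is the orthogonal projection onto $\mathrm{span}(e_1,\ldots,e_n)$, the closed $n$-dimensional subspace of sequences that vanish from index $n+1$ on. Since $A$ is bounded by Proposition~\ref{pro:Inglese} and $P_n$ is an orthogonal projection, $A_n$ is bounded with $\|A_n\| \le \|A\| = \sqrt{\pi}$. The range $\mathcal{R}(A_n) \subseteq P_n\ell^2$ is finite-dimensional (and in fact equal to $P_n\ell^2$, as the moments of the monomials $1,t,\ldots,t^{n-1}$ yield $n$ linearly independent image vectors, namely the columns of a scaled Hilbert matrix, which is non-singular). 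Consequently $A_n$ is compact, has closed range of dimension $n$, and, because $L^2(0,1)$ is infinite-dimensional, $A_n$ is non-injective.

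Next I would verify the adjoint formula \eqref{eq:Anstar} by a direct Fubini computation: for $x\in L^2(0,1)$ and $y=(y_j)\in\ell^2$,
\[
\langle A_n x, y\rangle_{\ell^2} \,=\, \sum_{j=1}^{n} y_j \int_{0}^{1} t^{j-1} x(t)\,dt \,=\, \int_{0}^{1} x(t)\Bigl(\sum_{j=1}^{n} y_j t^{j-1}\Bigr) dt \,=\, \langle x, A_n^* y\rangle_{L^2(0,1)},
\]
where the interchange is justified because the sum is finite. This immediately shows $A_n^*$ is everywhere defined on $\ell^2$ and bounded (by Schauder's theorem, or directly since its image is a polynomial of degree $\le n-1$ controlled by $\|y\|_{\ell^2}$), and that $\mathcal{R}(A_n^*) = \mathrm{span}(1,t,\ldots,t^{n-1})$.

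For the pseudoinverse $A_n^\dagger$, the key point is that $\mathcal{R}(A_n)$ is closed because it is finite-dimensional; by the criterion recalled in Remark~\ref{rem:rem2}, this forces $A_n^\dagger$ to be bounded and defined on all of $\ell^2 = \mathcal{R}(A_n) \oplus \mathcal{R}(A_n)^\perp$. The identity $\mathcal{R}(A_n^\dagger) = \mathcal{N}(A_n)^\perp = \overline{\mathcal{R}(A_n^*)}^{L^2(0,1)}$ together with the already established finite-dimensionality yields $\mathcal{R}(A_n^\dagger) = \mathrm{span}(1,t,\ldots,t^{n-1})$ as well. The null-space description is then just the definition: $x \in \mathcal{N}(A_n)$ iff $\int_0^1 t^{j-1} x(t)\,dt = 0$ for $j=1,\ldots,n$, i.e. $x \perp \mathrm{span}(1,t,\ldots,t^{n-1})$, and this orthogonal complement in $L^2(0,1)$ is infinite-dimensional.

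Finally, for the pointwise convergence, I would observe that, by definition of $P_n$,
\[
\|Ax - A_n x\|_{\ell^2}^2 \,=\, \|(I-P_n)Ax\|_{\ell^2}^2 \,=\, \sum_{j=n+1}^{\infty} \bigl([Ax]_j\bigr)^2,
\]
which is the tail of a convergent series (since $Ax \in \ell^2$) and hence tends to $0$ as $n \to \infty$. None of the steps should be a serious obstacle; the only point requiring a moment of care is the boundedness of $A_n^\dagger$, which one should phrase explicitly via the closed-range criterion rather than by attempting to write down $A_n^\dagger$ by hand.
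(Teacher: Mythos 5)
Your proof is correct and follows exactly the route the paper intends: the authors leave this proposition unproved as ``easy to prove'' but supply the key ingredient $A_n=P_nA$ in the surrounding text, and all of your steps (finite rank via the non-singular Hilbert segment, the Fubini-free adjoint computation, boundedness of $A_n^\dagger$ from closedness of the finite-dimensional range, and the tail-sum argument for pointwise convergence) are sound. You also correctly read $\mathcal{N}(A)$ in the statement as the evident typo for $\mathcal{N}(A_n)$, since $A$ itself is injective.
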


The results from Proposition~\ref{pro:semi}, in particular the conditions $\mathcal{R}(A_n) = \overline{\mathcal{R}(A_n)}^{\ell^2}$ and ${\rm dim}(\mathcal{R}(A_n))=n$, which indicate that $A_n: L^2(0,1) \to \ell^2$ is non-injective and non-surjective with a continuous pseudoinverse $A_n^\dagger$, is the basis for the following proposition. The last assertion of Proposition~\ref{pro:An} exploiting the inverse of the Hilbert matrix
is taken from the proof of Theorem~1 in \cite{Tal87}.

\begin{proposition} \label{pro:An}
The operator equation \eqref{eq:opeqn} with the operator $A_n$ from~\eqref{eq:An} is well-posed in the sense of Nashed (cf.~Definition~\ref{def:Nashed}), but ill-posed in the sense of Hadamard. For  all $y \in \ell^2$ the uniquely determined minimum-norm solution $x^\dagger_n$ to equation~\eqref{eq:opeqn} exists and can be verified as $x^\dagger_n=A^\dagger_n y=A^\dagger_n P_n y$. In detail, we have for the vector $\underline v=([P_n y]_1,...,[P_n y]_n) \in \mathbb{R}^n$  with $\|P_n y\|_{\ell^2}=\|\underline v\|_2$ the equation
\begin{equation} \label{eq:vector}
\|A_n^\dagger P_n y\|^2_{L^2(0,1)} =\langle \mathcal{H}_n^{-1} \underline v, \underline v\rangle_2,
\end{equation}
where $\mathcal{H}_n \in \mathbb{R}^{n \times n}$ is the corresponding ill-conditioned
$n$-dimensional segment of the Hilbert matrix \eqref{defHilbert}, and $\|\cdot\|_2$ as well as $\langle \cdot,\cdot\rangle_2$ denote the Euclidean norm and Euclidean inner product in $\mathbb{R}^n$, respectively.
\end{proposition}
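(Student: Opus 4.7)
The plan is to extract essentially everything from Proposition~\ref{pro:semi} plus one short explicit computation in the monomial basis. First I would verify the two framework statements. Nashed well-posedness is immediate, because $\mathcal{R}(A_n)$ is $n$-dimensional and hence closed in $\ell^2$. Hadamard well-posedness fails in two of its three aspects: by Proposition~\ref{pro:semi} the null space $\mathcal{N}(A_n)$ is infinite-dimensional (so $A_n$ is not injective), while the range has dimension $n$ in the infinite-dimensional target $\ell^2$ (so $A_n$ is not surjective).

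Next I would address the minimum-norm solution. Since $A_n$ has closed range, its Moore--Penrose inverse $A_n^\dagger$ is bounded and defined on all of $\ell^2$, so $x_n^\dagger:=A_n^\dagger y$ exists uniquely for every $y\in\ell^2$. The identity $A_n^\dagger y=A_n^\dagger P_n y$ follows from $A_n=P_nA$ via the Pythagoras-type split
\[
\|A_n x-y\|_{\ell^2}^2=\|A_n x-P_n y\|_{\ell^2}^2+\|(I-P_n)y\|_{\ell^2}^2,
\]
whose second summand is independent of $x$, so the least-squares minimization only sees $P_n y$.

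The main identity \eqref{eq:vector} is obtained by expanding $x_n^\dagger$ in the monomial basis. By Proposition~\ref{pro:semi}, $\mathcal{R}(A_n^\dagger)=\mathrm{span}(1,t,\ldots,t^{n-1})$, so I would write $x_n^\dagger(t)=\sum_{j=1}^n c_j\, t^{j-1}$ with unknown coefficient vector $\underline c\in\mathbb{R}^n$. Inserting into $A_n x_n^\dagger=P_n y$ and using $\int_0^1 t^{i+j-2}\,dt=\tfrac{1}{i+j-1}$ yields the Hilbert system $\mathcal{H}_n\underline c=\underline v$; since the monomials are linearly independent in $L^2(0,1)$, $\mathcal{H}_n$ is their Gram matrix and hence positive definite, so $\underline c=\mathcal{H}_n^{-1}\underline v$. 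Exploiting the Gram-matrix property once more,
\[
\|x_n^\dagger\|_{L^2(0,1)}^2=\sum_{i,j=1}^n\frac{c_i c_j}{i+j-1}=\langle\mathcal{H}_n\underline c,\underline c\rangle_2=\langle\mathcal{H}_n^{-1}\underline v,\underline v\rangle_2,
\]
which is exactly \eqref{eq:vector}. The main point to notice---and the reason the argument is so short---is the double role of $\mathcal{H}_n$ as the Gram matrix of the monomials and as the finite-dimensional compression of $AA^*=\mathcal{H}$, which makes the pseudoinverse collapse cleanly to $\mathcal{H}_n^{-1}$; there is no serious obstacle beyond recognizing this.
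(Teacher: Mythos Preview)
Your proposal is correct and essentially reconstructs what the paper leaves implicit: the paper does not give a self-contained proof but merely states that the first assertions follow from Proposition~\ref{pro:semi} and that the Hilbert-matrix identity \eqref{eq:vector} is taken from the proof of Theorem~1 in \cite{Tal87}. Your monomial-basis computation---recognizing $\mathcal{H}_n$ simultaneously as the Gram matrix of $\{1,t,\ldots,t^{n-1}\}$ and as the matrix of the normal equations---is exactly the argument behind the Talenti reference, so there is no meaningful difference in approach.
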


 \begin{remark} \label{rem:rem3}
{\rm We have $\|A-A_n\|_{\mathcal{L}(L^2(0,1),\ell^2)} \not\to 0$, because the sequence $\{A_n\}_{n=1}^\infty$ of compact operators cannot converge in norm to the non-compact operator $A$.
Instead of the ill-posedness of the `continuous' equation \eqref{eq:opeq} we have in general ill-conditioning  of the `semi-discrete' equation \eqref{eq:opeqn} even if $n$ is moderate, since, as outlined in the
subsequent section, the operator norm $\|A_n^\dagger\|_{\mathcal{L}(\ell^2,L^2(0,1))}$ tends to grow very fast with $n$. From \eqref{eq:vector},
in combination with the classical result
\begin{equation}\label{eq:hilbertmatrx_norm}
\|\mathcal{H}_n^{-1} \|_2  \le \hat C\,\exp\left( 4\ln(1+\sqrt{2}) \right)\le \hat C\,\exp(3.526\, n)
\end{equation}
from \cite{Todd54} and \cite{Wilf70} concerning the spectral norm of the inverse of the Hilbert matrix, we can estimate
with the constant $\hat C>0$  independent of $n$ as
$$ \|A_n^\dagger y\|^2_{L^2(0,1)}=\|A_n^\dagger P_n y\|^2_{L^2(0,1)} \le \hat C\,\exp(3.526\, n) \|P_n y\|^2_{\ell^2}\le \hat C \,\exp(3.526\, n) \|y\|^2_{\ell^2}$$
and consequently obtain
\begin{equation}\label{eq:Andagger_bound}
\|A_n^\dagger\|_{\mathcal{L}(\ell^2,L^2(0,1))} \le \sqrt{\hat C}\, \exp(1.763\,n).
\end{equation}
Numerical illustrations concerning the upper estimate \eqref{eq:Andagger_bound} can be found in Section~\ref{sec:casestudies}.
}
\end{remark}

\section{Error estimates for the truncated problem under noisy data} \label{sec:errest}

Let for the not available element $y \in \mathcal{R}(A) \subset \ell^2$ denote by $\tilde x \in L^2(0,1)$ the corresponding (uniquely determined) solution to equation \eqref{eq:opeq}. Moreover, let $y^\delta \in \ell^2$ be an available perturbation to $y$ satisfying the noise model
\begin{equation} \label{eq:noise}
\|y^\delta-y\|_{\ell^2} \le \delta,
\end{equation}
with noise level $\delta>0$. Then we have with $x_n^\delta:=A_n^\dagger y^\delta$ and  $x^\dagger_n:=A_n^\dagger y$ and
due to the orthogonality of the range of $A_n^\dagger$ and of the null-space of $A_n$ that
$$\|x^\delta_n-\tilde x\|^2_{L^2(0,1)} = \|x^\delta_n-x_n^\dagger\|^2_{L^2(0,1)}+ \|x_n^\dagger - \tilde x\|^2_{L^2(0,1)},$$
and further that
\begin{equation}\label{eq:est1}
\|x_n^\delta -\tilde x\|^2_{L^2(0,1)} \le  \|A_n^\dagger\|^2_{\mathcal{L}(\ell^2,L^2(0,1))} \,\delta^2 + \|(A_n^\dagger A -I) \tilde x\|^2_{L^2(0,1)}.
\end{equation}
The upper estimate \eqref{eq:est1} of the norm square error for the solution $\tilde x$ of the original operator equation \eqref{eq:opeq}
with $A$ from \eqref{eq:A}, by means of using as approximate solutions  the minimum-norm solutions $A_n^\dagger y^\delta$ of equation \eqref{eq:opeqn} based on noisy data $y^\delta$, yields a worst case error bound on the right-hand side with two terms.
In particular, the first term of the error bound expresses the noise amplification with amplification factor $\|A_n^\dagger\|_{\mathcal{L}(\ell^2,L^2(0,1))}$ that tends to grow extremely with $n$, see \eqref{eq:Andagger_bound}.
On the other hand, the second term of the error bound depending on $n$ and on the smoothness of the solution $\tilde x$ tends to decay to zero whenever $n$ tends to infinity as the following proposition indicates.

\begin{proposition} \label{pro:convergence}
We have $\lim \limits_{n \to \infty}\|(A_n^\dagger A -I) \tilde x\|_{L^2(0,1)} =0$ for arbitrary $\tilde x \in L^2(0,1)$.
\end{proposition}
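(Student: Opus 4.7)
The plan is to identify the operator $A_n^\dagger A$ explicitly as the orthogonal projection in $L^2(0,1)$ onto the subspace $\mathcal{P}_{n-1}:=\mathrm{span}(1,t,\ldots,t^{n-1})$ of polynomials of degree at most $n-1$, and then invoke the density of polynomials in $L^2(0,1)$.

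First I would unravel what $A_n^\dagger A \tilde x$ means using Proposition~\ref{pro:semi}. Since $A_n = P_n A$, we have $\mathcal{R}(A_n) = P_n(\ell^2)$, so for any $y \in \ell^2$ the pseudoinverse $A_n^\dagger y$ is the minimum-norm solution in $L^2(0,1)$ of $A_n x = P_n y$. Applied to $y = A \tilde x$, this makes $A_n^\dagger A \tilde x$ the minimum-norm element of $L^2(0,1)$ whose first $n$ moments agree with those of $\tilde x$.

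Next I would verify that this minimum-norm element is precisely the orthogonal $L^2$-projection $Q_n \tilde x$ of $\tilde x$ onto $\mathcal{P}_{n-1}$. Indeed, by Proposition~\ref{pro:semi}, $\mathcal{R}(A_n^\dagger) = \mathcal{P}_{n-1}$ and $\mathcal{N}(A_n) = \mathcal{P}_{n-1}^{\perp}$. The polynomial $p := Q_n \tilde x \in \mathcal{P}_{n-1}$ satisfies $\tilde x - p \perp \mathcal{P}_{n-1}$, hence $\int_0^1 t^{j-1}(\tilde x - p)(t)\,dt = 0$ for $j=1,\ldots,n$, i.e.\ $p$ matches the first $n$ moments of $\tilde x$. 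Any other solution has the form $p + h$ with $h \in \mathcal{N}(A_n) = \mathcal{P}_{n-1}^{\perp}$, and orthogonality gives $\|p+h\|_{L^2(0,1)}^2 = \|p\|_{L^2(0,1)}^2 + \|h\|_{L^2(0,1)}^2 \geq \|p\|_{L^2(0,1)}^2$. Therefore $A_n^\dagger A \tilde x = Q_n \tilde x$.

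Finally, I would conclude by invoking the Weierstrass approximation theorem: polynomials are dense in $C[0,1]$ and hence in $L^2(0,1)$. Since $\bigcup_{n=1}^\infty \mathcal{P}_{n-1}$ is a dense, increasing chain of closed subspaces, the associated orthogonal projections $Q_n$ converge strongly to the identity. Thus
\[
\|(A_n^\dagger A - I)\tilde x\|_{L^2(0,1)} = \|(Q_n - I)\tilde x\|_{L^2(0,1)} \longrightarrow 0 \quad (n \to \infty)
\]
for every $\tilde x \in L^2(0,1)$, which is the desired claim. The only step requiring any care is the explicit identification of the pseudoinverse action in the first two paragraphs; once that is established, the convergence is a textbook consequence of density, so I do not anticipate any serious obstacle.
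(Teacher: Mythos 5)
Your proposal is correct and follows essentially the same route as the paper: both arguments identify $A_n^\dagger A$ as the orthogonal projector onto $\mathrm{span}(1,t,\ldots,t^{n-1})$ and then conclude by density of polynomials in $L^2(0,1)$, the paper expressing this via the tail $\sum_{i=n}^\infty \langle \tilde x, L_i\rangle^2$ of the Legendre expansion while you phrase it as strong convergence of the projections $Q_n$ to the identity. Your explicit verification that the minimum-norm moment-matching element is the $L^2$-projection is a welcome elaboration of a step the paper states rather tersely.
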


{\parindent0em Proof:} The proof ideas sketched in the following can be found in \cite{Tal87} or \cite{Angbook02}. By $\{L_i\}_{i=0}^\infty$ denote the orthonormal basis in $L^2(0,1)$ formed by the normalized and shifted Legendre polynomials introduced above in {Section~\ref{sec:HMPintro}}. This is the results of the Gram-Schmidt orthogonalization procedure applied to the non-orthogonal basis $\{t^{i-1}\}_{i=1}^\infty$ in $L^2(0,1)$.
 By construction of this basis, we have $\{L_{i-1}\}_{i=1}^n$ as orthonormal basis of the $n$-dimensional subspace ${\rm span}(1,t,...,t^{n-1})$ of $L^2(0,1)$.
Then we can split the solution $\tilde x=\sum_{i=1}^\infty \langle \tilde x,L_{i-1} \rangle_{L^2(0,1)}\,L_{i-1}$ with  $\|\tilde x\|^2_{L^2(0,1)}=\sum_{i=1}^\infty \langle \tilde x,L_{i-1} \rangle_{L^2(0,1)}^2 < \infty$ in a unified manner as $\tilde x=x_1+x_2$ with $x_1=A_n^\dagger A \tilde x \in {\rm span}(L_0,...,L_{n-1})$ and $x_2 \in \mathcal{N}(A_n) \perp {\rm span}(L_0,...,L_{n-1})$. Thus we arrive at
$$\|(A_n^\dagger A -I) \tilde x\|^2_{L^2(0,1)}= \sum \limits_{i=n}^\infty \langle \tilde x,L_i \rangle^2_{L^2(0,1)} \to 0 \quad \mbox{as} \quad n \to \infty.$$
This proves the proposition. \qed

To prove convergence rates for $\|(A_n^\dagger A -I) \tilde x\|_{L^2(0,1)} \to 0$ as $n \to \infty$, additional smoothness conditions on $\tilde x$ have to be imposed. We only mention the following two such results, which are taken from \cite[p.~511]{Tal87} and \cite[Remark~4.1]{Angbook02}.

\begin{proposition} \label{pro:conrates}
If $\tilde x \in H^1(0,1)$, then the estimate
\begin{equation} \label{eq:h1error}
\|(A_n^\dagger A -I) \tilde x\|_{L^2(0,1)} \le \frac{1}{2n}\, \|\tilde x\|_{H^1(0,1)}
\end{equation}
holds true. If we even have $\tilde x \in H^2(0,1)$, then the estimate
$$\|(A_n^\dagger A -I) \tilde x\|_{L^2(0,1)} \le \frac{1}{2 \sqrt{2}\, n^2}\, \|\tilde x\|_{H^2(0,1)}$$
is valid.
\end{proposition}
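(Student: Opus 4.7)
The plan is to reduce the estimate to the classical decay of Fourier--Legendre coefficients under Sobolev smoothness. From the proof of Proposition~\ref{pro:convergence} one already has the Parseval-type identity
\[
\|(A_n^\dagger A - I)\tilde x\|_{L^2(0,1)}^2 \;=\; \sum_{i \ge n} \langle \tilde x, L_i\rangle_{L^2(0,1)}^2,
\]
so the task reduces to quantitatively bounding the tail of the shifted-Legendre expansion of $\tilde x$ by $\|\tilde x'\|_{L^2}$ (respectively $\|\tilde x''\|_{L^2}$).

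First I would establish the key recurrence. Starting from the Bonnet-type identity $(2k+1)P_k(u) = P'_{k+1}(u) - P'_{k-1}(u)$ on $[-1,1]$, I integrate against $\tilde x((u+1)/2)$ and perform an integration by parts. The boundary terms vanish thanks to $P_{k+1}(\pm 1) = P_{k-1}(\pm 1)$. Passing back to the normalized shifted basis $L_k(t) = \sqrt{2k+1}\,P_k(2t-1)$ on $[0,1]$ yields
\[
\langle \tilde x, L_k\rangle_{L^2(0,1)} \;=\; \frac{\langle \tilde x', L_{k-1}\rangle_{L^2(0,1)}}{2\sqrt{(2k-1)(2k+1)}} \;-\; \frac{\langle \tilde x', L_{k+1}\rangle_{L^2(0,1)}}{2\sqrt{(2k+1)(2k+3)}},
\]
so the Legendre coefficients of $\tilde x$ are expressed as linearly weighted coefficients of $\tilde x'$ with prefactors of order $\tfrac{1}{2k}$. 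Squaring, summing over $k\ge n$, and reindexing the two shifted Parseval tails, the coefficient attached to each $\langle \tilde x', L_j\rangle^2$ with $j\ge n-1$ is bounded by $\tfrac{1}{4n^2}$; combined with $\|\tilde x'\|_{L^2}\le \|\tilde x\|_{H^1}$ this delivers \eqref{eq:h1error}.

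For $\tilde x \in H^2(0,1)$ the same recurrence is applied once more to each of $\langle \tilde x', L_{k\pm 1}\rangle$, expressing $\langle \tilde x, L_k\rangle$ as a combination of $\langle \tilde x'', L_{k-2}\rangle$, $\langle \tilde x'', L_k\rangle$ and $\langle \tilde x'', L_{k+2}\rangle$ with prefactors of order $\tfrac{1}{4k^2}$. Squaring and summing produces the claimed $n^{-2}$ decay, while the additional $\tfrac{1}{\sqrt{2}}$ in the stated constant arises from carefully collecting the mixed products of the four resulting shifted series and appealing to Parseval for $\tilde x''$.

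The main obstacle is not the rate, which drops out immediately from the recurrence, but the sharp bookkeeping of the constants. A blunt Cauchy--Schwarz on the two-term (resp.\ four-term) recurrence inflates the prefactor by a factor of $\sqrt{2}$; to recover the tight $\tfrac{1}{2n}$ and $\tfrac{1}{2\sqrt{2}\,n^2}$ one has to regroup the shifted Parseval tails so that each $\langle \tilde x', L_j\rangle^2$ (respectively $\langle \tilde x'', L_j\rangle^2$) contributes only with its natural weight. Since these sharp prefactors have already been derived in \cite[p.~511]{Tal87} and \cite[Rem.~4.1]{Angbook02}, the proof may be finished by invoking those references once the reduction above to Fourier--Legendre tail estimates is in place.
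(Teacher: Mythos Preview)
The paper does not actually prove this proposition: it simply states the two estimates and attributes them to \cite[p.~511]{Tal87} and \cite[Remark~4.1]{Angbook02}. Your proposal therefore goes strictly further than the paper, since you outline the underlying mechanism (reduction to the Legendre tail via Proposition~\ref{pro:convergence}, the Bonnet recurrence $(2k+1)P_k=P_{k+1}'-P_{k-1}'$, integration by parts with vanishing boundary contributions, and Parseval) before invoking the very same references for the sharp constants. The recurrence you derive,
\[
\langle \tilde x, L_k\rangle = \frac{\langle \tilde x', L_{k-1}\rangle}{2\sqrt{(2k-1)(2k+1)}} - \frac{\langle \tilde x', L_{k+1}\rangle}{2\sqrt{(2k+1)(2k+3)}},
\]
is correct, and the strategy is exactly the one used in the cited sources.

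One minor remark: your worry that the ``blunt'' Cauchy--Schwarz $(a-b)^2\le 2a^2+2b^2$ loses a factor $\sqrt{2}$ in the $H^1$ case is somewhat overstated. After applying it and reindexing, the coefficient of $\langle \tilde x',L_j\rangle^2$ for $j\ge n+1$ is $\frac{1}{2(2j+1)(2j+3)}+\frac{1}{2(2j-1)(2j+1)}=\frac{1}{(2j-1)(2j+3)}\le\frac{1}{4n^2}$, and the two boundary terms $j=n-1,n$ are even smaller; so the constant $\tfrac{1}{2n}$ in \eqref{eq:h1error} already follows without delicate regrouping. The genuine bookkeeping issue arises only in the $H^2$ case, where your deferral to \cite{Angbook02} is appropriate and matches what the paper does.
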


Error and stability estimates for the original (non-truncated) Hausdorff moment problem  can be found in {Section~\ref{sec:stabest}} below.

\section{Relation to other problems and inversion}\label{sec_relations}
The HMP arises as a simplification in several
classical direct and inverse problems. Moreover,  inversion
formulae for the HMP have been derived, which can be used in other related inverse problems as well. These various ramifications are the topic of this section.

\subsection{The Laplace transform}
At first we
relate the HMP to the Laplace transform.
Recall that the Laplace transform is defined as
\begin{align*}
\mathcal{L} : L^2[0,\infty] &\to  L^2[0,\infty] \\
           f & \to \mathcal{L} f(s):= \int_0^\infty e^{-s t} f(t)\, dt.
\end{align*}
The relation to the HMP operator is the following (see, e.g., \cite{Tal87}):
\begin{proposition}
For $x \in L^2([0,1])$, define $ \tilde{x}$ as
\[ \tilde{x}(\tau):=  x(e^{-\tau}) \qquad \tau \in [0,\infty] \]
Then we have
\[ [A x]_j =  \mathcal{L}(\tilde{x})(j), \qquad j = 1,2,\ldots   \]
\end{proposition}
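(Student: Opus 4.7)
The proof is essentially a one-line change of variables, so the plan is to be explicit about the substitution and then remark on the (minor) technical subtlety concerning integrability.

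My plan is to start from the defining formula $[Ax]_j = \int_0^1 t^{j-1} x(t)\,dt$ given in \eqref{eq:A} and apply the substitution $t = e^{-\tau}$, so that $dt = -e^{-\tau}\,d\tau$, and the limits $t=0$ and $t=1$ correspond to $\tau=\infty$ and $\tau=0$, respectively. Under this substitution $t^{j-1}$ becomes $e^{-(j-1)\tau}$ and, combined with the Jacobian factor $e^{-\tau}$, the total exponential weight collapses to $e^{-j\tau}$. Together with the identification $x(e^{-\tau}) = \tilde{x}(\tau)$, the transformed integral reads $\int_0^\infty e^{-j\tau}\tilde{x}(\tau)\,d\tau$, which is by definition $\mathcal{L}(\tilde{x})(j)$.

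The only point deserving care is that the proposition places $\mathcal{L}$ on $L^2[0,\infty]$, whereas for $x \in L^2(0,1)$ the pulled-back function $\tilde{x}$ need not be square integrable on $[0,\infty]$ (the change of variable produces an extra factor $1/t$ in the $L^2$-norm, which may not be integrable near $t=0$). This does not affect the claim, however, because we only need the Laplace integral to be well-defined pointwise at each positive integer $j$. For every fixed $j \ge 1$, absolute convergence follows from Cauchy--Schwarz applied on $[0,1]$:
\[
\int_0^1 t^{j-1}\,|x(t)|\,dt \le \frac{1}{\sqrt{2j-1}}\,\|x\|_{L^2(0,1)} < \infty,
\]
so the substitution is valid (it amounts to Tonelli/standard change of variables for an absolutely integrable function on $(0,1)$) and both sides of the asserted identity are finite and equal.

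I do not anticipate any genuine obstacle; the entire content is the substitution $t = e^{-\tau}$, and the only thing worth flagging in the write-up is that the mapping $x \mapsto \tilde{x}$ is not bounded $L^2(0,1) \to L^2(0,\infty)$, so the proposition should be read as a pointwise identity between $[Ax]_j$ and the Laplace integral evaluated at $j$, rather than an operator identity in any $L^2$ sense.
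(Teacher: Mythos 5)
Your proof is correct and follows exactly the paper's argument, which is the single substitution $t=e^{-\tau}$ with $dt=-e^{-\tau}\,d\tau$. The additional remark about $\tilde{x}$ not necessarily lying in $L^2[0,\infty]$ and the identity holding only pointwise at each $j$ is a sensible clarification, but the core of the argument is the same.
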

\begin{proof}
This follows easily by the substitution $t = e^{-\tau}$ with
$dt = -e^{-\tau} d\tau $.
\end{proof}
Thus, the HMP operator is equivalent to a sampling of the
Laplace transform of $\tilde{x}$ at the integer points.
An inversion formula for the Laplace transform due to Widder \cite{Widd41}
is closely related to the classical HMP inversion formula, which
we present in the next section.

\bigskip
\subsection{Hausdorff's range characterization}
There is a classical characterization of the range of $A$ due to Hausdorff
\cite{Hausdorff23}.
For $(y_i)_{i=1}^\infty$,
we define the forward differences
\[ \mu_{m,n}:= \sum_{l=0}^n (-1)^l  \binomial{n}{l}
y_{m+l+1}, \qquad m,n \in \{0,1,\ldots\}, \]
and for $N \in \mathbb{N}$ and $0\leq m \leq N$,
\[ \lambda_{N,m}:=
\binomial{N}{m}
\mu_{m,N-m}
\]
Then, according to Hausdorff, a sequence of moments is in $\mathcal{R}(A)$ if and only if
there exists a constant $L$ such that  for all $N$,
\begin{equation}\label{eq:Haus}
 (N+1) \sum_{m=0}^N \left| \lambda_{N,m}\right|^2  \leq L . \end{equation}
Similar conditions can be formulated for moments of $L^p$ functions or functions of bounded variations.

Note that our previous  $\Lo Q$-decomposition
of the operator $A$ allows for an alternative characterization
of the range; however, it is not obvious,
how Hausdorff's result is related to that, and this
is what we would like to study in this part in more detail.
The range characterization by the
 $\Lo Q$-decomposition is based on the fact that
the operator $\Lo: \ell^2 \to \ell^2$ in the decomposition \eqref{eq:LQ} is triangular,  and hence its inverse can be calculated
by back-substitution.
\begin{lemma}
The inverse to $\Lo$ is given in (infinite) matrix form by
\begin{equation}\label{eq:Linverse}
\Lo^{-1}_{i,j} =
(-1)^{(i-1)+(j-1)} \sqrt{(2(i-1)+1}
{\begin{pmatrix} i-1 \\ j-1 \end{pmatrix}}{\begin{pmatrix} i-1 +j-1 \\ j-1 \end{pmatrix}}, \qquad i,j=1,\ldots
\end{equation}
\end{lemma}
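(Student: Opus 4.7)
My plan is to identify the formal inverse of the triangular operator $\Lo$ by reinterpreting both $\Lo$ and its inverse through the (shifted) Legendre basis. From \eqref{eq:LQ} and the definition of the isometry $Q$, the matrix entries are
\[
\Lo_{i,j} \;=\; \int_0^1 t^{i-1}\,L_{j-1}(t)\,dt,
\]
so $\Lo$ sends the Legendre coefficient sequence $c=(c_n)_{n\ge 1}$, $c_n=\langle x,L_{n-1}\rangle_{L^2(0,1)}$, of an element $x\in L^2(0,1)$ to its moment vector $y=Ax$. Since $\Lo$ is lower triangular with nonzero diagonal entries (visible from \eqref{low}), its algebraic inverse exists entrywise by back-substitution and is itself lower triangular; hence every sum that will appear is finite and no bounded invertibility of $\Lo$ on $\ell^2$ is needed.

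To produce the inverse I would exploit orthonormality of the Legendre basis, which gives $c_n=\int_0^1 L_{n-1}(t)\,x(t)\,dt$. Substituting the classical monomial expansion of the shifted Legendre polynomials,
\[
L_{n-1}(t) \;=\; \sqrt{2n-1}\,\sum_{k=0}^{n-1}(-1)^{n-1-k}\binomial{n-1}{k}\binomial{n-1+k}{k}\, t^{k},
\]
which is obtained from Rodrigues' formula applied to $P_{n-1}(2t-1)$ (cf.~\cite{AbSt64}), and interchanging the finite sum with the integration yields
\[
c_n \;=\; \sqrt{2n-1}\,\sum_{m=1}^{n}(-1)^{n-m}\binomial{n-1}{m-1}\binomial{n+m-2}{m-1}\, y_m,
\]
since $\int_0^1 t^{m-1}x(t)\,dt = y_m$. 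Reading off coefficients identifies $\Lo^{-1}_{n,m}$ with the bracketed expression, and the identity $(-1)^{n-m}=(-1)^{(n-1)+(m-1)}$ combined with the relabeling $n\to i$, $m\to j$ reproduces exactly formula~\eqref{eq:Linverse}.

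The only nontrivial ingredient is the monomial expansion of the shifted Legendre polynomials; this is standard but must be cited or briefly derived from Rodrigues' formula, and is really the one non-computational step. A fully computational alternative would be to verify $\sum_j \Lo_{i,j}\Lo^{-1}_{j,k}=\delta_{i,k}$ directly from \eqref{low} and \eqref{eq:Linverse}, which reduces to a terminating hypergeometric identity of Chu--Vandermonde type; however, the Legendre-based argument is both shorter and more conceptually transparent, because it makes explicit that $\Lo^{-1}_{n,m}$ is nothing other than the coefficient of $t^{m-1}$ in $L_{n-1}(t)$.
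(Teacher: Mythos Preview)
Your argument is correct. The key observation that $\Lo^{-1}_{n,m}$ is precisely the coefficient of $t^{m-1}$ in the orthonormal shifted Legendre polynomial $L_{n-1}$ is exactly right, and once the classical monomial expansion of $P_{n-1}(2t-1)$ is inserted, the formula \eqref{eq:Linverse} drops out immediately. The justification that only the \emph{formal} inverse is needed (lower triangular, nonzero diagonal, finite sums) is also appropriate and avoids any spurious boundedness issues.

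By way of comparison: the paper does not actually prove this lemma. It simply records the formula and remarks that it agrees, up to a sign convention, with formula~(20j) in Talenti~\cite{Tal87}; the companion formula \eqref{low} for $\Lo$ itself is likewise stated to have been ``verified with Mathematica.'' Your derivation therefore goes well beyond what the paper offers. It is also more illuminating than the alternative you mention (direct verification of $\Lo\Lo^{-1}=I$ via a Chu--Vandermonde-type identity), because it makes transparent \emph{why} the inverse has this particular shape: $\Lo$ encodes moments of the $L_{j-1}$, so $\Lo^{-1}$ must encode the monomial coefficients of the $L_{i-1}$. The only point to be careful about when writing it up is to state the monomial expansion of the shifted Legendre polynomials with the correct normalization and sign; your version is consistent with the convention $L_n(t)=\sqrt{2n+1}\,P_n(2t-1)$ used in the paper.
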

Again this agrees with the formula (20j) in \cite{Tal87} up to the
sign factor arising from a different normalization.
A full characterization of the range $\mathcal{R}(A)$ of  $A: L^2(0,1) \to \ell^2$
in form of a Picard-type condition
is thus
that
\begin{equation}\label{rangecond}  \mathcal{R}(A) = \mathcal{R}(\Lo) =
\left\{ (y_i)_{i=1}^\infty \,:\, \|\Lo^{-1} y\|_{\ell^2} < \infty \right\}.
\end{equation}
It is quite interesting that due to the ill-posedness of type~I,
and Corollary~\ref{cor:equiv},
 this range has to include a closed
infinite-dimensional subspace $Y_1$
with a constant $C>0$ such that
\[ \|A^{-1} y\|_{L^2[0,1]} =\|\Lo^{-1} y\|_{\ell^2} \leq C \|y\|_{\ell^2}  \qquad \forall y \in Y_1 \subset \ell^2. \]

Now,  Hausdorff's condition  \eqref{eq:Haus} is another
characterization, which is related  to
$\Lo$ in a non-obvious way.   By using  the substitution $l \to N-m-l$, we find,
\begin{align*}
\mu_{m,N-m} &=
 \sum_{l=0}^{N-m} (-1)^l \left(\begin{array}{c} N-m \\ l\end{array} \right)
y_{m+l+1}\\
&=
 (-1)^{N-m} \sum_{l=0}^{N-m} (-1)^l \left(\begin{array}{c} N-m \\ N-m-l\end{array} \right)
y_{N-l+1} \\
&=
 (-1)^{N-m} \sum_{l=0}^{N-m} (-1)^l \left(\begin{array}{c} N-m \\ l\end{array} \right)
\mu_{N-l+1},
\end{align*}
where up to a sign, this is
nothing but the $(N-m)$-th backward difference at the index $N+1$.
We
 define the following triangular right upper matrix:
$R_N \in \R^{(N) \times (N)}$
\begin{align*}
 R_N: =
\begin{pmatrix}
 1  &  -\binomial{N-1}{1}
  &
\binomial{N-1}{2} & \ldots &  \binomial{N-1}{N-2} & (-1)^{N-1} \\
0 & 1& \binomial{N-2}{2} & \ldots & & 1 \\
0 & 0 & 1 & \ldots &  \ldots & -1 \\
0 & 0 & 0 & 1 &  \ldots & 1 \\
0 & 0 & 0 & 0 &  1 & -1 \\
0 & 0 & 0 & 0 & 0 &  1
\end{pmatrix},
\end{align*}
i.e., the nonzero elements are
 \begin{align*}
(R_N)_{i,j} =&
 (-1)^{N-i}  (-1)^{N-j} \left(\begin{array}{c} N-i \\ N-j\end{array} \right) =
 (-1)^{N-i}  (-1)^{N-j} \left(\begin{array}{c} N-i \\ j-i\end{array} \right) \\
  \qquad
& \qquad \qquad \qquad \qquad \qquad \qquad \qquad \qquad \qquad \qquad   i = 1, N, j = i,\ldots, N. \end{align*}
Then, replacing in the formula for $\mu_{m,N-m}$ the value $N \to N-1$
and setting $i = m+1$ and $j = N -l$, we find with this definition that
\[
(\mu_{i-1,N-1 - (i-1)})_{i=1}^N
=
\begin{pmatrix} \mu_{0,N-1} \\
\mu_{1,N-2} \\
\ldots  \\
\ldots \\
\mu_{N-1,0}
\end{pmatrix} =
R_N \begin{pmatrix} y_1\\ y_2 \\ \ldots \\ \ldots \\ y_{N} \end{pmatrix}.
\]
Moreover, define the diagonal matrix (setting $i = m+1$):
 \[ D_N:= \sqrt{N}\text{diag}\left(\binomial{N-1}{i-1} \right), \qquad i = 1,\ldots N \]
which allows us to write
\[ \sqrt{N} \lambda_{N-1,i-1} =  (D_N R_N P_N y)_{i=1}^N. \]
Thus,  the Hausdorff condition  \eqref{eq:Haus} is equivalent to
\[\sup_N \|D_N R_N  P_{N} y\|_{\R^N}^2 \leq  L. \qquad y \in \ell^2,\]
where $\|\cdot\|_{\R^N}$ denotes the Euclidean norm in $\R^N$.
On the other hand, the range condition \eqref{rangecond} can be written as
\[ \sup_{N} \|P_N \Lo^{-1}  y\|_{\ell^2}  \leq C, \]
which raises the question of the relation of
$D_N R_N P_N$  and  $P_N\Lo^{-1}$. At least asymptotically they should
generate equivalent norms. The interesting result is the following.

\begin{proposition}
Define the $R^{N\times N}$ matrix
\[ V_N:=  D_N R_N  P_{N} \Lo P_{N}. \]
Then,
$ V_N^T V_N$ is a diagonal matrix with
\[ V_N^T V_N = : T_N = \text{\rm diag}\left(\frac{\binomial{N-1}{k-1}}{\binomial{N-1+k}{k-1}} \right)_{k=1,N}. \]
Moreover, extending $T_N$ by $0$ to $k>N$ yields an operator on $\ell^2$
which converges pointwise to the identity
\[ \lim_{N\to \infty} T_N  = Id. \]
In particular we have for all $y \in \mathcal{R}(A)$
\[  \|D_N R_N  P_{N} y\|_{\R^N}^2  = \|T_N^\frac{1}{2} P_N \Lo^{-1} y\|_{\R^N}.  \]
\end{proposition}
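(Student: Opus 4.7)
The plan is to reinterpret $V_N$ as an $L^2(0,1)$-pairing with Bernstein polynomials, identify the resulting quadratic form with an integral operator whose Legendre spectrum can be computed, and then exploit the lower-triangular structure of $\Lo$.

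\emph{Stage 1 (columns of $V_N$).} A direct calculation with the binomial theorem shows that for any $y = Ax$ with $x\in L^2(0,1)$,
\[ (R_N P_N y)_i \;=\; \sum_{j=i}^N (-1)^{i+j}\binom{N-i}{j-i}\int_0^1 t^{j-1}x(t)\,dt \;=\; \int_0^1 t^{i-1}(1-t)^{N-i}x(t)\,dt, \]
so that multiplication by $D_N$ yields $(D_N R_N P_N y)_i = \sqrt N\,\langle B_{i-1,N-1},x\rangle_{L^2(0,1)}$, where $B_{i,n}(t) := \binom{n}{i}t^i(1-t)^{n-i}$ is the Bernstein basis polynomial. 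Choosing $x = L_{k-1}$ (so $y = AL_{k-1} = \Lo e_k$) gives
\[ V_N e_k \;=\; \sqrt N\bigl(\langle B_{i-1,N-1},L_{k-1}\rangle\bigr)_{i=1}^N, \quad k=1,\ldots,N, \]
and hence
\[ (V_N^T V_N)_{k,l} \;=\; N\!\int_0^1\!\!\int_0^1 L_{k-1}(s)L_{l-1}(t)K_N(s,t)\,ds\,dt, \quad K_N(s,t) := \sum_{i=1}^N B_{i-1,N-1}(s)B_{i-1,N-1}(t). \]

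\emph{Stage 2 (diagonalisation, the main obstacle).} On the subspace $\mathrm{span}(1,t,\ldots,t^{N-1})$ the Legendre polynomials $L_{k-1}$ are exactly the eigenfunctions of the self-adjoint Legendre operator $\mathcal{D} := -\tfrac{d}{dt}[t(1-t)\tfrac{d}{dt}]$, with the $N$ distinct eigenvalues $(k-1)k$. I would show that the integral operator $B_N^*B_N$ with kernel $K_N$ commutes with $\mathcal{D}$ on this subspace---this forces them to share the eigenbasis. Two integrations by parts (the boundary factor $t(1-t)$ annihilates all boundary contributions) reduce the commutation to the kernel identity $\mathcal{D}_s K_N(s,t) = \mathcal{D}_t K_N(s,t)$, which can be verified from the closed form
\[ K_N(s,t) \;=\; (1-s-t)^{N-1}\,P_{N-1}\!\left(\tfrac{1-s-t+2st}{1-s-t}\right), \]
obtained by applying the classical identity $\sum_{i=0}^n\binom{n}{i}^2 u^iv^{n-i}=(v-u)^n P_n(\tfrac{u+v}{v-u})$ with $u=st$, $v=(1-s)(1-t)$, together with the Legendre ODE for $P_{N-1}$. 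The eigenvalue $T_N(k)/N$ is then pinned down by computing the diagonal entry $\langle L_{k-1},B_N^*B_N L_{k-1}\rangle$. (As an alternative, one may simply invoke Derriennic's classical spectral theorem for the Bernstein--Durrmeyer operator $U_{N-1}=NB_N^*B_N$, whose eigenvalues on the Legendre basis coincide with $T_N(k)$.)

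\emph{Stage 3 (pointwise limit and norm identity).} For fixed $k$,
\[ T_N(k) \;=\; \frac{\binom{N-1}{k-1}}{\binom{N+k-1}{k-1}} \;=\; \prod_{j=1}^{k-1}\frac{N-j}{N+j}\;\longrightarrow\; 1 \qquad (N\to\infty), \]
and since $0 < T_N(k) \le 1$ uniformly, the extension by zero converges to the identity in the strong operator topology on $\ell^2$. For the concluding identity, for $y\in\mathcal{R}(A)=\mathcal{R}(\Lo)$ set $z := \Lo^{-1}y \in \ell^2$; the lower-triangularity of $\Lo$ gives $P_N\Lo z = P_N\Lo P_N z$, hence $D_N R_N P_N y = V_N P_N\Lo^{-1}y$, so that
\[ \|D_N R_N P_N y\|_{\R^N}^2 = \langle V_N^T V_N\,P_N\Lo^{-1}y,\,P_N\Lo^{-1}y\rangle = \|T_N^{1/2}P_N\Lo^{-1}y\|_{\R^N}^2 \]
by the diagonality established in Stage 2. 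The hard part remains Stage 2, as the diagonalisation is a specific and non-trivial Bernstein--Legendre identity, for which no soft argument is available.
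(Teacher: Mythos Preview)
Your proof is correct and your route to the diagonalisation is genuinely different from the paper's. The paper does not prove the identity $V_N^TV_N=T_N$ at all: it simply cites the combinatorial result of Askey, Schoenberg and Sharma and remarks that the formula was double-checked symbolically with Mathematica. Your Stage~1 observation that $D_NR_NP_N(Ax)$ is nothing but $\sqrt{N}$ times the vector of Bernstein--basis inner products $\langle B_{i-1,N-1},x\rangle$ is the key insight the paper does not make explicit; it immediately identifies $V_N^TV_N$ (in the Legendre coordinates) with the Bernstein--Durrmeyer operator $U_{N-1}=N\,B_N^*B_N$ restricted to polynomials of degree $\le N-1$, whose Legendre eigenstructure is classical (Derriennic). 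This buys you a conceptual explanation of \emph{why} the matrix is diagonal---commutation with the hypergeometric differential operator $\mathcal{D}=-\tfrac{d}{dt}[t(1-t)\tfrac{d}{dt}]$ and simplicity of its spectrum---whereas the paper's reference treats it as a combinatorial miracle. Your Stage~3 coincides with the paper's argument: both exploit the lower-triangularity of $\Lo$ via $P_N\Lo(I-P_N)=0$ (equivalently $P_N\Lo^{-1}(I-P_N)=0$) to pass between $P_Ny$ and $P_N\Lo^{-1}y$, and both verify the pointwise limit by rewriting $T_N(k)$ as a finite product tending to $1$. The only caveat is that your direct verification of $\mathcal{D}_sK_N=\mathcal{D}_tK_N$ from the closed form, while feasible, is a lengthy calculation; citing Derriennic's theorem, as you suggest, is the cleaner option and is entirely legitimate here.
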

\begin{proof}
The result for the
matrix $V_N^T V_N$ has been calculated and proven  by
Askey, Schoenberg and Scharma \cite{Askey82}. We have verified the result
by symbolic calculation using Mathematica, also to adopt the result to our notation.
It follows that for all $y \in \ell^2$,
\begin{align*}
 \| T_N^\frac{1}{2}  P_N y\|_{\R^N}^2  &=
(T_N  P_N y,P_N y)_{\ell^2} =
(V_N^T V_N P_N y,P_N y)_{\ell^2} \\
&= \|V_N P_N y \|_{\R^N}^2
= \| D_N R_N  P_{N} \Lo P_{N} y\|_{\R^N}^2.
\end{align*}
Set $z = P_N L P_N y$ and note that this relation can be inverted as
$P_N y = P_N \Lo^{-1} P_N z$. (This follows, e.g. from the fact that
$P_N$ is an orthogonal projector, and that\linebreak \mbox{$\Lo^{-1} P_N \Lo P_N = Id$} from the
triangular structure.)
This yields
\begin{align*}
 \| T_N^\frac{1}{2}  P_N \Lo^{-1} P_N z\|_{\R^N}^2
= \| D_N R_N  P_{N} z\|_{\R^N}^2.
\end{align*}
Finally, by the triangular structure, it follows that
 $P_N \Lo^{-1} (I-P_N) = 0$, thus we have that
$\| T_N^\frac{1}{2}  P_N \Lo^{-1} P_N z\|_{\R^N} =
 \| T_N^\frac{1}{2}  P_N \Lo^{-1}  z\|_{\R^N}.$

The fact hat $T_N$ converges pointwise to the identity can
be verified by some elementary calculations: We find that
\[ \frac{\binomial{N-1}{k-1}}{\binomial{N-1+k}{k-1}} =
\Pi_{j=0}^{k-1} \left( 1- 2 \frac{j+1}{j + N + 1} \right), \]
from which we observe that the diagonal entries are montonically decreasing
and that they converge pointwise for $k$ fixed to $1$ as $N \to \infty$.
\end{proof}

\subsection{The linearized radially symmetric impedance tomography problem}
The electrical impedance tomography (EIT) problem is another classical inverse problem, which is related to the HMP problem.
In EIT, the aim is to extract information about the conductivity
from boundary measurements of current/voltage pairs.
Since the definition of the problem in the
seminal paper of Calderon \cite{Ca80}, it has been investigated in various direction
and now serves as the  paradigmatic instance of a parameter identification
problem from boundary measurements; see, e.g., the review \cite{Bo02}.

In a  mathematical formulation, the problem  is to consider
solution of the boundary value problem on a Lipschitz domain $\Omega$
\begin{equation}\label{maineq}
\begin{split}
\mbox{div} (\gamma \nabla u) = 0 \quad \text{in } \Omega, \qquad
u = f  \quad \text{ on } \Omega.
\end{split}
\end{equation}
The data for the problem are multiple or infinitely many
pairs of Cauchy-data \linebreak $(f, \gamma \frac{\partial}{\partial n} u|_{\partial \omega})$
on the boundary, and the interest is to  recover the conductivity
$\gamma(x)$ in the interior $\Omega$. The data can be encoded into the so-called
Dirichlet-to-Neumann (DtN) map,
$\Lambda_\gamma: H^\frac{1}{2}(\partial \Omega) \to
H^{-\frac{1}{2}}(\partial \Omega)$, defined as the mapping
$f \to \gamma \frac{\partial}{\partial n}u |_{\partial \Omega}$, i.e.,
from Dirichlet boundary data to Neumann boundary data. It is convenient
to subtract from the data the corresponding DtN operator of
a constant known background conductivity (which we take here
as $\gamma_0 = 1$) such that
the inverse problem amounts to  reconstructing
a perturbation of the background
$\gamma = 1 + \sigma$ from the perturbation of the DtN operator
$\Lambda_\gamma -\Lambda_{1}$. Finally, for small perturbation
it makes sense to perform a linearization such that
the linearized impedance tomography problem uses the data
$\Lambda_{1}'[\sigma]$ instead of $\Lambda_\gamma -\Lambda_{1}$,
with $\Lambda_{1}'$ denoting the Frechet derivative.

In the simplest case of $\Omega = \{(x,y)\subset \R^2 |x^2+y^2 \leq 1\}$
being the unit disk and if the perturbed conductivity
$\sigma$ is radially symmetric, then the problem is closely related to
HMP.  Indeed, the operator $ \Lambda_{1}'$ can be expressed as
\[ \langle \Lambda_{1}'[\sigma] f, g\rangle
= \int_{\Omega} \sigma (x) \nabla u_f(x).\nabla u_g(x) dx,\]
where $u_f, u_g$ are solutions to \eqref{maineq} with $\gamma = 1$,
i.e., harmonic functions. In case of $\sigma$ being radially symmetric:
\[ \sigma(x)  = \sigma(\sqrt{x^2+y^2}), \]
the only relevant information in $\Lambda_{1}'$ is in the diagonal,
i.e., it suffices to take $f = g$; cf.~\cite{Ki17}.
Furthermore, we may choose a
sequence of orthonormalized funtions $f$ on the boundary
such as $f_n(\phi) \sim \{\sin(n \phi),\cos(n\phi)\}$.
The corresponding solutions to \eqref{maineq} (with the normalization
such that $\|\nabla u_f\|_{L^2(\Omega)} =1$) is then given
in polar coordinates  as
\[ u_{n,s}(r,\phi) = c_n r^n \sin(n \phi), \qquad
u_{n,c}(r,\phi) = c_n r^n \cos(n \phi),  \qquad c_n =\frac{\sqrt{n+1}}{n \sqrt{2\pi}}.\]
Thus, we find that
\[  |u_{n,s}(r,\phi)|^2  =   |u_{n,c}(r,\phi)|^2 = c_n^2 n^2 r^{2(n-1)}, \]
which yields for $n = 1,\ldots$
\begin{align*}
 \langle \Lambda_{1}'[\sigma] f_n, f_n\rangle
 &= \int_{\Omega} \sigma(\sqrt{x^2+y^2}) |\nabla u_n,\{c,s\}(x,y)|^2 dxdy \\
& =  c_n^2 n^2 \int_0^{2\pi} \int_{0^1} \sigma(r) r^{2(n-1)} r drd\phi
=  (n+1) \int_{0}^{1} \sigma(r) r^{2n-1} dr \\
&=
\frac{(n+1)}{2} \int_{0}^{1} \sigma(\sqrt{t}) t^{n-1} dt.
\end{align*}
Thus, we observe that in this radially symmetric case
the impedance tomography problem  essentially agrees --- up to
a diagonal scaling $D = \text{diag}(\frac{(n+1)}{2})_{n} $ --- to the
HMP:
\[  \Lambda_{1}'[\sigma]  = D A \tilde{\sigma}, \]
with $\tilde{\sigma}(t) = \sigma(\sqrt{t})$.
An inversion formula for the linearized impedance tomography problem with forward operator $\Lambda_{1}'$, which strongly resembles  the formula for $\Lo^{-1}$, has been stated in~\cite{Ki17}.

\section{Stability estimates for the Hausdorff moment\\ problem and associated moduli of continuity } \label{sec:stabest}

\subsection{Conditional stability estimates for bounded Sobolev-norms}

Since the HMP is ill-posed, only conditional stability estimates
may be expected. That is, we have to restrict the solution
to a compact set. Even if this is imposed, one can only
expect for classical regularity sets at most logarithmic
stability estimates due to the exponential ill-posedness of the
problem. For instance, from results of \cite{Tal87}, such stability
results follow.

For the next results, we define the data norm
\begin{align}\label{eq:defdel}  \delta = \|A x\|_{\ell^2} =
\left[ \sum_{j=1}^\infty \left( \int_{0}^1 x(t) t^{j-1} dt \right)^2  \right]^\frac{1}{2}. \end{align}
For conditional stability estimates, one is interested
in bounding the modulus of continuity
\begin{equation} \label{eq:modcont}
\omega_M(\delta):=\sup_{x \in M,\, \|Ax\|_{\ell^2} \le \delta\,} \|x\|_{L^2(0,1)} \leq  \psi(\delta),
\end{equation}
where $M$ is an appropriate compact subset of $L^2(0,1)$ and $\psi$ an index function that characterizes a rate. Then, $\omega_M(\delta)$ is increasing in $\delta>0$ with the limit condition
$\lim_{\delta \to 0\,} \omega_M(\delta)=0$. Note that, for constants $\lambda>1$ and
centrally symmetric and convex sets $M$, we have $\omega_{\lambda M}(\delta)=\lambda\,\omega_M(\delta/\lambda)$.
For further details of this concept we refer, for example, to \cite{HMS08}.

More general, one may replace  $\|x\|_{L^2(0,1)}$ in \eqref{eq:modcont} by some
alternative norm or by $|\ell(x)|$, with $\ell$ being
a linear functional (cf.~{Subsection~\ref{sub:t1}} below).

\medskip

We start, however, with \eqref{eq:modcont} and consider $H^1$-bounds and the compact set \linebreak $M=\{x \in L^2(0,1): \|x\|_{H^1(0,1)} \le E\}$ in~\eqref{eq:modcont}.

\begin{theorem} \label{thm:log}
Assume that we have the  a priori bound
\[ \|x\|_{H^1(0,1)} \leq E\,. \]
Then we obtain, with $\delta$ defined in \eqref{eq:defdel} and for sufficiently small $\frac{\delta}{E}$, the conditional stability estimate
\[ \|x\|_{L^2(0,1)} \leq \frac{7}{\sqrt{8}}E\left(-\ln\left(C\frac{\delta}{E}\right)\right)^{-1}\,,  \]
where the constant $C>0$ is independent of $E$ and $\delta$. Consequently, the asymptotics for the corresponding modulus of continuity is given as
$$ \sup_{x \in L^2(0,1):\, \|x\|_{H^1(0,1)} \le 1, \;\|Ax\|_{\ell^2} \le \delta\,} \|x\|_{L^2(0,1)} = \mathcal{O} \left[ \frac{1}{\ln(\frac{1}{\delta})}\right] \quad \mbox{as} \quad \delta \to 0.$$
\end{theorem}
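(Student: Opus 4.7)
The plan is to combine the worst-case error estimate \eqref{eq:est1} for the truncated problem from Section~\ref{sec:errest} with the two quantitative ingredients already at hand: the $H^1$-approximation rate of Proposition~\ref{pro:conrates} and the exponential upper bound \eqref{eq:Andagger_bound} on $\|A_n^\dagger\|_{\mathcal{L}(\ell^2,L^2(0,1))}$. The truncation index $n$ is treated as a free parameter to be optimized against $\delta$ and $E$; this will yield the logarithmic stability rate characteristic of severely ill-posed problems.

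First I observe that $A_n^\dagger A = A_n^\dagger P_n A = A_n^\dagger A_n$ is the orthogonal projector in $L^2(0,1)$ onto the $n$-dimensional subspace $\mathrm{span}(L_0,\dots,L_{n-1}) = \mathcal{N}(A_n)^\perp$. The Pythagorean identity then gives
\[
\|x\|_{L^2(0,1)}^2 \;=\; \|A_n^\dagger A x\|_{L^2(0,1)}^2 \;+\; \|(I - A_n^\dagger A)x\|_{L^2(0,1)}^2 .
\]
Since $A_n = P_n A$, we have $\|A_n x\|_{\ell^2} \leq \|A x\|_{\ell^2} \leq \delta$, and \eqref{eq:Andagger_bound} yields
\[
\|A_n^\dagger A x\|_{L^2(0,1)}^2 \;\leq\; \|A_n^\dagger\|^2 \,\delta^2 \;\leq\; \hat C\,\exp(3.526\,n)\,\delta^2 ,
\]
while Proposition~\ref{pro:conrates} bounds the second summand by $E^2/(4n^2)$ using the $H^1$-hypothesis. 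Together,
\[
\|x\|_{L^2(0,1)}^2 \;\leq\; \hat C\,e^{3.526\,n}\,\delta^2 \;+\; \frac{E^2}{4n^2} .
\]

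Next I optimize over $n \in \mathbb{N}$. The idea is to choose $n$ roughly as the largest integer such that $\hat C\, e^{3.526\, n}\,\delta^2$ does not exceed a moderate fixed multiple of $E^2/(4n^2)$; up to lower-order logarithmic corrections this corresponds to $n \approx \tfrac{1}{1.763}\,\ln(E/(C\delta))$ with $C = \sqrt{\hat C}$. Substituting back produces a bound of the form (constant)$\cdot E/n$, from which the logarithmic dependence on $E/(C\delta)$ announced in the theorem emerges. The assumption that $\delta/E$ be sufficiently small is exactly what is needed to guarantee that this optimal $n$ is at least $1$ and that $-\ln(C\delta/E)>0$, making the bound meaningful. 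The asymptotic $\mathcal{O}(1/\ln(1/\delta))$ for the modulus of continuity with $E=1$ then follows at once by letting $\delta\to 0$.

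The main obstacle I anticipate is not conceptual --- the balancing argument is classical for exponentially ill-posed problems --- but the bookkeeping of constants. Pinning down the explicit prefactor $7/\sqrt{8}$ requires careful tracking of the integer rounding in the choice of $n$, of the constant $\hat C$ inherited from \eqref{eq:Andagger_bound}, and of exactly how much slack one allows in the balance between the noise-amplification term and the approximation term. Apart from these computational details, the structure of the proof is entirely dictated by the Pythagoras splitting together with the two quantitative bounds already recorded in Sections~\ref{sec:truncated} and \ref{sec:errest}.
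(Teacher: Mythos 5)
Your proposal is correct and follows essentially the same route as the paper: the Pythagorean splitting into the projection onto $\mathrm{span}(1,t,\dots,t^{n-1})$ plus its orthogonal remainder is exactly the paper's decomposition $x = x_{N-1}+r_{N-1}$ in Legendre coordinates, the remainder bound $E^2/(4n^2)$ is Proposition~\ref{pro:conrates}, and your amplification factor $\|A_n^\dagger\|^2\le\hat C\exp(3.526\,n)$ coincides via \eqref{eq:vector} with the paper's bound $\|P_{N-1}\Lo^{-1}P_{N-1}\|^2\le\lambda_{\max}(P_{N-1}\mathcal{H}^{-1}P_{N-1})\le\hat C\exp(3.5\,N)$. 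The only part you leave schematic --- the balancing step --- is carried out in the paper explicitly via the Lambert-$W$ function, which is where the prefactor $7/\sqrt{8}$ comes from.
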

\begin{proof}
As in \cite{Tal87}, we may represent $x$ in terms of the Legendre Polynomials with
\[ x = \sum_{i=1}^\infty  \lambda_i L_{i-1}(t) \qquad
\lambda_i = \Lo^{-1} (A x).   \]
Following \cite{Tal87}, we split the norm into two parts. While not needed for this proof, we adapt our index shift from Remark \ref{remark:indexshift} compared to \cite{Tal87}.
Let $x_{N-1}$ be the projection onto the span of the first $N-1$ Legendre Polynomials, and
$r_{N-1}$ be the remainder. Because of orthogonality, we have
\[ \|x\|_{L^2(0,1)}^2  = \|x_{N-1}\|_{L^2(0,1)}^2  +  \|r_{N-1}\|_{L^2(0,1)}^2.  \]
For the tail, we have the approximation properties of orthogonal
polynomials, cf.~ \cite[Eq.(28)]{Tal87} that
\[ \|r_{N-1}\|_{L^2(0,1)}^2 \leq \frac{E^2}{4 N^2} \]
For the corresponding projected part, it
 follows by orthogonality that with $P_N$ being the projector onto
 the first $N$ coefficients in $\ell^2$ that
\[ \|x_{N-1}\|_{L^2(0,1)}= \|P_{N-1} \lambda\|_{\ell^2} = \| P_{N-1} \Lo^{-1} P_{N-1}  (A x)\|_{\ell^2}. \]
Here we used the lower triangular structure, i.e., the first $N-1$ coefficients
can be calculated from the first $N-1$ moments.
This yields that
\[ \|x\|_{L^2(0,1)}^2
\leq  \frac{E}{4 N^2} + \|P_{N-1} \Lo^{-1} P_{N-1}\|_{\mathcal{L}(\ell^2)}^2 \|A x\|_{\ell^2}^2 \]
The norm of $\|\Lo^{-1} P_{N-1}\|_{\mathcal{L}(\ell^2)}^2$ has been bounded in  \cite{Tal87} such that we have by using the (infinite) Hilbert matrix $\mathcal{H}$ and by recalling Remark~\ref{rem:non-c}
\begin{align*}   \|P_{N-1} \Lo^{-1} P_{N-1}\|_{\mathcal{L}(\ell^2)}^2 \leq
 \|\Lo^{-1} P_{N-1}\|_{\mathcal{L}(\ell^2)}^2 &=
 \lambda_{max}(P_{N-1} (\Lo^*)^{-1} \Lo^{-1} P_{N-1}) \\
 &=
\lambda_{max}(P_{N-1} \mathcal{H}^{-1} P_{N-1}) \leq  \hat C \exp(3.5\,N),
\end{align*}
where $\lambda_{max}$ denotes the largest eigenvalue of the corresponding self-adjoint operator mapping in $\ell^2$ and where we have simplified the multiplier $3.526$ in the exponent to $3.5$.
Thus, we end up with
\begin{equation}\label{eq:estimate_step}
\|x\|_{L^2(0,1)}^2 \leq  \frac{E^2}{4 N^2}  + \hat C\,\exp(3.5 \, N) \, \delta^2.
\end{equation}
By balancing these two terms we find  $N$ as the solution of
\[\frac{1}{4\hat{C}} \frac{E^2}{\delta^2}  = N^2\,\exp(3.5\, N),\]
which is given by
\[N=\frac{4}{7} W\left(\pm \frac{7}{8\sqrt{\hat{C}}}\frac{E}{\delta}\right) \]
where $W$ is the principal branch of the Lambert-W function \cite{lambertw}, defined as
\begin{equation}\label{eq:lambertw}
z=W(z)e^{W(z)}.
\end{equation}
Inserting the expression for $N$ into \eqref{eq:estimate_step}  yields
\begin{equation}\label{eq:estimate_step2}
\|x\|_{L^2(0,1)}^2\leq 2\hat{C}\delta^2 \exp\left(2 W\left(\frac{7}{8\sqrt{\hat{C}}}\frac{E}{\delta}\right)\right)= \frac{49}{32}\frac{E^2}{W\left(\frac{7}{8\sqrt{\hat{C}}}\frac{E}{\delta}\right)^2}.
\end{equation}
For $z\rightarrow \infty$ in \eqref{eq:lambertw}, i.e., $\delta\rightarrow 0$ above, we have the asymptotical expansion
\[
W(z)=\ln z-\ln\ln z +o(1),
\]
see \cite{lambertw}. This yields $W(z)\geq K \ln z$ for any $0<K<1$ and $z=z(K)$ large enough. Without loss of generality we set $K=\frac{1}{2}$. Combining this with \eqref{eq:estimate_step2} and taking the square root gives
\[
\|x\|_{L^2(0,1)}\leq \frac{7}{\sqrt{8}}E\frac{1}{\ln\left(\frac{7}{8\sqrt{\hat{C}}}\frac{E}{\delta}\right)}.
\]
Rearranging completes the proof.
\end{proof}

\smallskip

Furthermore, we may verify that the logarithmic conditional stability result from Theorem~\ref{thm:log} cannot be improved to
H\"older-type conditional stability rates. This is done by a counterexample in the following proposition, where even the extended
Sobolev space situation of $M=\{x \in L^2(0,1): \|x\|_{H^k(0,1)} \le 1\}\;(k=1,2,...)$  with respect to the modulus of continuity $\omega_M(\delta)$ is exploited.
\begin{proposition} \label{pro:mu}
For any $\mu \in (0,1)$, any constant $C>0$, and any integer
$k\geq 1$,
there exists functions $x$ with
\[ \|x\|_{H^k(0,1)} \leq 1 \]
such that
\[\|x\|_{L^2(0,1)} \geq C \|A x\|_{\ell^2}^\mu. \]
 Consequently, for all $k=1,2,...$, an asymptotic bound  of H\"older-type
$$ \sup_{x \in L^2(0,1):\, \|x\|_{H^k(0,1)} \le 1,\;\|Ax\|_{\ell^2} \le \delta} \|x\|_{L^2(0,1)} = \mathcal{O} \left(\delta^\mu\right) \quad \mbox{as} \quad \delta \to 0$$
cannot even hold for arbitrarily small exponents $\mu>0$.
\end{proposition}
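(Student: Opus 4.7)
The plan is to construct, for given $\mu\in(0,1)$ and integer $k\geq 1$, a one-parameter family of smooth functions $\{x_\lambda\}_{\lambda>1}$ that are sharply localized near $t=0$, such that $\|x_\lambda\|_{H^k}\le C\lambda^k$ while $\|Ax_\lambda\|_{\ell^2}$ decays as an arbitrarily large negative power of $\lambda$. After normalizing to unit $H^k$-norm this family will contradict any H\"older-type estimate with exponent $\mu$, and consequently rule out the asymptotic $\mathcal{O}(\delta^\mu)$ bound on the modulus of continuity. Two ingredients drive the construction: (i) dilation $x_\lambda(t):=\sqrt{\lambda}\,\phi(\lambda t)$ with $\phi\in C_c^\infty((1/2,1))$, which by the substitution $s=\lambda t$ in $[Ax_\lambda]_j=\int_0^1 t^{j-1}x_\lambda(t)\,dt$ produces the compact formula $[Ax_\lambda]_j=\lambda^{1/2-j}c_j$ with $c_j:=\int_0^1 s^{j-1}\phi(s)\,ds$; and (ii) a high-order vanishing-moment condition on $\phi$ that annihilates the leading (geometrically largest) terms $c_1,\ldots,c_N$, so that only the small tail $j>N$ contributes.

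Concretely, I would fix an integer $N\ge k/\mu$ and set $\phi:=\psi^{(N)}$ for some non-zero $\psi\in C_c^\infty((1/2,1))$. Then $N$-fold integration by parts yields $c_j=0$ for $j=1,\ldots,N$, and trivially $|c_j|\le \|\phi\|_{L^1}$ for $j>N$. The elementary scaling identities $\|x_\lambda\|_{L^2(0,1)}=\|\phi\|_{L^2}$ and $\|x_\lambda^{(m)}\|_{L^2}=\lambda^m\|\phi^{(m)}\|_{L^2}$ give $\|x_\lambda\|_{H^k(0,1)}\le C_1\lambda^k$, and the geometric series estimate
\[
\|Ax_\lambda\|_{\ell^2}^2 \;=\; \sum_{j=N+1}^\infty \lambda^{1-2j}c_j^2 \;\le\; C_2\,\lambda^{-2N-1} \qquad (\lambda\ge 2)
\]
follows at once. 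Setting $\tilde x_\lambda:=x_\lambda/(C_1\lambda^k)$ yields $\|\tilde x_\lambda\|_{H^k}\le 1$, $\|\tilde x_\lambda\|_{L^2}\ge c_3\,\lambda^{-k}$, and $\|A\tilde x_\lambda\|_{\ell^2}\le C_4\,\lambda^{-k-N-1/2}$, so that
\[
\frac{\|\tilde x_\lambda\|_{L^2(0,1)}}{\|A\tilde x_\lambda\|_{\ell^2}^\mu} \;\ge\; c_5\,\lambda^{\,\mu(k+N+1/2)-k}.
\]
Since $N\ge k/\mu$ forces $\mu(k+N+1/2)-k\ge \mu(k+1/2)>0$, this ratio diverges as $\lambda\to\infty$, and for any prescribed $C>0$ one can pick $\lambda$ large enough that $\|\tilde x_\lambda\|_{L^2}\ge C\,\|A\tilde x_\lambda\|_{\ell^2}^\mu$, which proves the first assertion. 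The negation of $\omega_M(\delta)=\mathcal{O}(\delta^\mu)$ with $M=\{x\in L^2(0,1)\,:\,\|x\|_{H^k}\le 1\}$ follows by the usual contradiction: a uniform H\"older bound would give $\|\tilde x_\lambda\|_{L^2}\le C'\,\|A\tilde x_\lambda\|_{\ell^2}^\mu$ in conflict with the construction.

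I do not anticipate any serious obstacle: every step is an elementary scaling computation, and the vanishing-moment device (writing $\phi$ as an $N$-th derivative of a bump) is a completely standard trick from wavelet theory. The only point demanding mild care is to keep track of the dependence of the constants $C_1,\ldots,C_5$ on $\phi$ and hence on $N$, but since $\mu$ and $k$ are fixed once and for all, $N$ is fixed as well and these constants are harmless. Conceptually the construction matches the exponential ill-posedness already visible in Theorem~\ref{thm:log}: a function supported in a $\lambda^{-1}$-neighbourhood of $t=0$ with $N$ vanishing moments has essentially invisible data $Ax$ but only polynomially amplified smoothness norms, and this wide mismatch is exactly what excludes H\"older stability at every exponent $\mu\in(0,1)$.
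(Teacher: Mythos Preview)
Your proof is correct and follows essentially the same approach as the paper: a dilated bump function with $N$ vanishing moments (obtained as the $N$-th derivative of a $C_c^\infty$ function), with the scaling parameter sent to the localizing limit. The only cosmetic difference is that the paper uses the prefactor $r^p$ with $p\ge k-\tfrac12$ so that $\|x_r\|_{H^k}\le 1$ holds directly without a separate normalization step, whereas you fix the $L^2$-norm via the $\sqrt{\lambda}$ prefactor and normalize afterwards; the resulting exponent bookkeeping is equivalent.
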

\begin{proof}
Take a fixed function $g \in C_0^\infty([0,1])$ with
$g \not = 0$, compact support in $[0,1]$ and with the first $m$ moments vanishing:
\[ \int_0^1 g(t) t^{j-1} dt = 0 \qquad j = 1,\ldots, m. \]
For instance, $g$ could be taken as the $m$-th derivative of
a usual $C_0^\infty([0,1])$-function.
The value of $m$ will be determined later. We mention that the function
$g$ can be extended as a $C^\infty$-function to $\mathbb{R}$ by setting $g(t)=0$ for all $t \in \mathbb{R} \setminus [0,1]$. By multiplying with a constant, we may additionally
assume that $\|g\|_{H^k(0,1)} \leq 1$.
Note also that $\|g\|_{L^2(0,1)} = C_1 \not = 0$
and also $\|A g\|_{\ell^2} = C_2 \not = 0$ because of the
injectivity of $A$.

We now define for $0 < r \leq 1$ and $p \ge k -\frac{1}{2}$ the scaled function
\[ x_r(t):=  r^p g(\tfrac{t}{r}) \qquad (0 \le t \le 1). \]
Denoting by $ x_r^{(n)}$ the $n$-th derivative with respect to $t$,
we find  due to $0<r \le 1$ and $2 p + 1 \geq 2k$  that
\[ x_r^{(n)}(t)  = r^{p-n} g^{(n)} (\tfrac{t}{r}) \]
and
\begin{align*}
\| x_r^{(n)}\|_{L^2(0,1)}^2 &=
r^{2p-2n}  \int_0^1 g^{(n)} (\tfrac{t}{r})^2 dt =
r^{2p+1-2n}  \int_0^{r^{-1}} g^{(n)} (z)^2 dz\\
&=
r^{2p+1-2n}  \|g^{(n)} \|_{L^2(0,1)}^2
\le
r^{2(k-n)} \|g^{(n)}\|_{L^2(0,1)}^2
\end{align*}
because the support of $g$ is a subset of $[0,1]$.
Hence, we have
\[
\| x_r\|_{H^k(0,1)}^2 =
\sum_{n=0}^{k} \|x_r^{(n)}\|_{L^2(0,1)}^2
\le \sum_{n=0}^{k}r^{2(k-n)}   \|g^{(n)}\|_{L^2(0,1)}^2  \leq
\|g\|_{H^k(0,1)}^2 \leq 1.
 \]
Calculating the moments yields similar
\[ \int_0^1  x_r(t) t^{j-1}dt = r^p
 \int_0^1  g(\tfrac{t}{r}) t^{j-1} dt =
 r^{p+1} \int_0^1  g(z) (rz)^{j-1} dz =
 r^{p+j} \int_0^1  g(z) z^{j-1} dz.\]
 Thus
 \begin{align*}
 \|A x_r\|_{\ell^2}^2 &=
 \sum_{j=1}^\infty r^{2p + 2 j}
 \left(\int_0^1  g(z) z^{j-1} dz \right)^2
 =  \sum_{j=m+1}^\infty r^{2p + 2 j}
 \left(\int_0^1  g(z) z^{j-1} dz \right)^2\\
& \leq  r^{2p + 2  + 2m }
 \sum_{j=m+1}^\infty  \left(\int_0^1  g(z) z^{j-1} dz \right)^2
 \leq  r^{2p +2 + 2 m } \|A g\|_{\ell^2}^2.
 \end{align*}
Thus,
 \begin{align*}
\frac{ \|x_r\|_{L^2(0,1)}^2}{\|A x_r\|_{\ell^2}^{2\mu}} &\geq
\frac{   r^{2p+1} \|g\|_{L^2(0,1)}^2}{r^{(2p +2 + 2 m)\mu} \|A g\|_{\ell^2}^{2\mu}. }
= \frac{1}{r^{(1 + 2 m)\mu - (2p+1)(1-\mu)}}
\frac{\|g\|_{L^2(0,1)}^2}{\|A g\|_{\ell^2}^{2\mu}} \\
&=
\frac{1}{r^{(1 + 2 m)\mu - (2p+1)(1-\mu)}} \frac{C_1^2}{C_2^{2\mu}}.
\end{align*}
Now we may choose $m$ large enough as $m> (2p+1)\left(\frac{1-\mu}{\mu}\right)-\frac{1}{2}$ such that
the exponent $(1 + 2 m)\mu - (2p+1)(1-\mu)$ of $r$ in the denominator becomes positive. Then, we observe that for $r$ small enough
the right-hand side can be made arbitrary large  and in particular
larger  than any given constant $C$ in the proposition.
This proves the result.
\end{proof}

\begin{remark} \label{rem:svd}
{\rm For operator equations \eqref{eq:opeq} with compact linear operators $A$ in Hilbert spaces the degree of ill-posedness can be characterized by the decay rate of the singular values $\sigma_i$ of the forward operator. If we consider the equation $\mathcal{E}_k\, x=z$ for the embedding operator $\mathcal{E}_k: H^k(0,1) \to L^2(0,1)$, then it is well-known that $\sigma_i(\mathcal{E}_k) \sim i^{-k}$. This implies that finding the $k$-th derivative is a moderately ill-posed problems with ill-posedness degree $k$. Since the HMP operator $A$ from \eqref{eq:A} is not compact, we cannot verify its degree of
ill-posedness by means of singular values. However, the composition $A \circ \mathcal{E}_k: H^k(0,1) \to \ell^2$ is a compact operator, and one can observe the impact of the non-compact operator $A$ on the
compact embedding operator $\mathcal{E}_k$ by considering the decay rate  of $\sigma_i(A \circ \mathcal{E}_k)$. If one had a constant $C>0$ such that \begin{equation}\label{eq:stab}
\|x\|_{L^2(0,1)} \le C\, \|Ax\|_{\ell^2} \end{equation}
for all $x \in H^k(0,1)$, then we would have $\sigma_i(A \circ \mathcal{E}_k) \sim \sigma_i(\mathcal{E}_k)$ and the HMP operator $A$ would not have an impact on the degree of ill-posedness of the $k$-times differentiation problem. However,  Proposition~\ref{pro:mu} indicates that this is not
true. Precisely, the logarithmic rate occurring in Theorem~\ref{thm:log} indicates for $k=1$ that
the operator equation \eqref{eq:opeq} with $A \circ\mathcal{E}_1$ as forward operator is even severely
(exponentially) ill-posed. This is even more remarkable as we have
verified that the stability inequality~\eqref{eq:stab} {\em does} hold on
a subspace $X_1$! Contrary to expectation, however,
the composite operator  $A \circ \mathcal{E}_k$
does not seem to inlcude an infinite-dimensional subspace where it is mildly ill-posed.

Consequently, we conclude that {\em a non-compact operator with non-closed range can in a composition with a compact operator strongly destroy the ill-posedness degree of the compact part}.

Such question was discussed in \cite{HW09}, where in $L^2(0,1)$ the composition of a non-compact multiplication operator of type \eqref{eq:mult} with the compact integration operator had been studied. In contrast to the HMP situation, it could be shown in \cite{HW09} that wide classes of such multiplication operators with essential zeros in the multiplier function $m$ do not change the decay rate of the singular values of the integration operators and hence do not no change the degree of ill-posedness caused by the non-compact part.
}\end{remark}

\subsection{H\"older stabilty at $t = 1$} \label{sub:t1}
For the HMP, the reconstruction of one single value at $t=1$ is much
more stable than the reconstrution of the whole function.
We therefore look for conditional stability
estimates with $\|x\|_{L^2}$ replaced by the
evaluation functional $\ell: x\to x(1)$.

\medskip

We have the following result:
\begin{theorem}
Assume that $x \in H^1(0,1)$ with the a priori bound
\[ \|x^\prime\|_{L^2(0,1)} \leq E_1 .\]
Then, with $\delta$ defined in \eqref{eq:defdel},
we find
\[ |x(1)| \leq C  (E_1 \delta)^\frac{1}{2}  \]
and consequently the corresponding asymptotics
$$ \sup_{x \in H^1(0,1):\, \|x^\prime\|_{L^2(0,1)} \le E_1,\;\|Ax\|_{\ell^2} \le \delta\,} |x(1)| = \mathcal{O} \left(\sqrt{\delta}\right) \quad \mbox{as} \quad \delta \to 0.$$
Furthermore, with $x \in H^1(0,1)$ and the a priori bound
\[ \|x'\|_{L^\infty(0,1)} \leq E_\infty, \]
we get the stability estimate
$$|x(1)| \leq \tilde C \,\delta^{2/3}\,\ln(1/\delta)$$
with a constant $\tilde C>0$ depending on $E_\infty$ whenever $\delta>0$ is sufficiently small.
This yields the asymptotics
$$\sup_{x \in H^1(0,1):\, \|x^\prime\|_{L^\infty(0,1)} \le E_\infty,\;\|Ax\|_{\ell^2} \le \delta\,} |x(1)| = \mathcal{O} \left[ \delta^{2/3}\ln(1/\delta)\right] \quad \mbox{as} \quad \delta \to 0.$$
If, however, $\delta \ge \underline{c}>0$, then we have an estimate of the form
$$|x(1)| \leq \bar{C}\,\delta$$
with some constant $\bar{C}>0$ depending on $E_\infty$ and $\underline c$.
\end{theorem}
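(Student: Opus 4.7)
My plan is to base all three assertions on a single integration-by-parts identity. For $x\in H^1(0,1)$ and any $j\ge 1$, writing $y_j := [Ax]_j$ and $z_j := \int_0^1 t^j x'(t)\,dt$, integration by parts gives the representation
\[ x(1) = j\, y_j + z_j. \]
The crucial structural observation is that $(z_j)_{j\ge 1}$ is, up to an index shift, the sequence $A x'$; Proposition~\ref{pro:Inglese} then yields the Hilbert-space bound $\|(z_j)\|_{\ell^2}\le\sqrt{\pi}\,\|x'\|_{L^2}$, while the $L^\infty$-bound on $x'$ will give the complementary pointwise estimate $|z_j|\le \|x'\|_{L^\infty}/(j+1)$. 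These are the two ingredients that separate parts one and two.

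For the first estimate I would average the identity over the window $j\in\{n,\ldots,2n\}$ of length $n+1$:
\[ x(1) = \frac{1}{n+1}\sum_{j=n}^{2n} j\, y_j \;+\; \frac{1}{n+1}\sum_{j=n}^{2n} z_j. \]
Cauchy--Schwarz on the first sum, together with $\|y\|_{\ell^2}\le\delta$ and $\sum_{j=n}^{2n} j^2=O(n^3)$, yields a bound of order $\sqrt{n}\,\delta$; Cauchy--Schwarz on the second sum combined with $\|(z_j)\|_{\ell^2}\le\sqrt{\pi}\,E_1$ gives a bound of order $E_1/\sqrt{n}$. Optimising over $n$ with the balance $n\sim E_1/\delta$ produces $|x(1)|\le C\sqrt{E_1\,\delta}$, hence the $\mathcal{O}(\sqrt{\delta})$ asymptotics.

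For the second estimate the averaging step is unchanged, but the tail $\frac{1}{n+1}\sum z_j$ is estimated via the \emph{pointwise} bound $|z_j|\le E_\infty/(j+1)$, which gives $\frac{1}{n+1}\sum_{j=n}^{2n} z_j = O(E_\infty/n)$ since $\sum_{j=n}^{2n} 1/(j+1)$ stays bounded. Balancing $\sqrt{n}\,\delta\sim E_\infty/n$ then leads to $n\sim (E_\infty/\delta)^{2/3}$ and a rate of order $E_\infty^{1/3}\,\delta^{2/3}$. The main obstacle I expect here is reproducing exactly the $\ln(1/\delta)$ factor present in the statement: the averaging argument above already delivers the cleaner power $\delta^{2/3}$, so the extra logarithm most likely stems either from a different strategy (for instance, Jackson-type polynomial approximation of $x$ of degree $N\sim\ln(1/\delta)$ combined with the exponential bound \eqref{eq:Andagger_bound} on $\|A_n^\dagger\|$) or from a slightly less tight intermediate inequality; I would try both and retain whichever route reproduces the statement.

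The third estimate is essentially free and uses only the first moment. Indeed, $y_1=\int_0^1 x(t)\,dt$ satisfies $|y_1|\le\delta$, and $|x(1)-x(t)|\le E_\infty(1-t)$ gives
\[ \bigl|x(1)-y_1\bigr|=\left|\int_0^1 (x(1)-x(t))\,dt\right|\le \tfrac{E_\infty}{2}. \]
Consequently $|x(1)|\le\delta+\tfrac{E_\infty}{2}\le\bigl(1+\tfrac{E_\infty}{2\underline c}\bigr)\,\delta$ as soon as $\delta\ge\underline c$, which is the claimed bound with $\bar{C}=1+E_\infty/(2\underline c)$.
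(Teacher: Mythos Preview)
Your argument is correct and starts from the same integration-by-parts identity as the paper, $x(1)=j\,y_j+z_j$. The genuine difference is the averaging window: the paper averages over $j=1,\ldots,N$, whereas you average over the dyadic block $j=n,\ldots,2n$. For the first ($L^2$) part this makes no difference in the rate; the paper bounds each $z_j$ individually by $\|x'\|_{L^2}/\sqrt{2j+1}$ and sums, while you use the global bound $\|(z_j)\|_{\ell^2}\le\sqrt{\pi}\,\|x'\|_{L^2}$ --- both lead to $C\sqrt{n}\,\delta+C E_1/\sqrt{n}$ and hence $C\sqrt{E_1\delta}$.

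For the second ($L^\infty$) part your window choice is genuinely better. In the paper the full-range average produces the term $\frac{1}{N}\sum_{j=1}^{N}\frac{1}{j+1}\sim\frac{\ln N}{N}$, and after balancing with $\sqrt{N}\,\delta$ and choosing $N=\delta^{-2/3}$ one is left with precisely the $\ln(1/\delta)$ factor that appears in the statement. Your dyadic window replaces that harmonic sum by $\sum_{j=n}^{2n}\frac{1}{j+1}\le\ln 3=O(1)$, so the tail term becomes $O(E_\infty/n)$ rather than $O(E_\infty\ln n/n)$, and the balance $n\sim(E_\infty/\delta)^{2/3}$ yields the clean rate $C\,E_\infty^{1/3}\delta^{2/3}$. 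You should not search for a missing logarithm or for an alternative Jackson-type argument: the $\ln(1/\delta)$ in the paper is an artefact of summing from $j=1$, and your version simply proves a slightly sharper inequality than the one stated.

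For the third part the paper takes $N=2$ in its summed inequality, obtaining $|x(1)|\le E_\infty\ln 2+C\sqrt{2}\,\delta$; your direct use of the single moment $y_1$ and the Lipschitz bound $|x(1)-x(t)|\le E_\infty(1-t)$ is a cleaner route to the same conclusion, with the explicit constant $\bar C=1+E_\infty/(2\underline c)$.
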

\begin{proof}
Since we suppose $x \in H^1(0,1)$, the point evaluation $x(1)$ is always well-defined by the trace theorem. Using integration by parts, we obtain for all
$j\geq 1$
\[ \int_{0}^1 x(t) t^{j-1} dt  =
\int_{0}^1 x(t) \frac{d}{dt}\left( \frac{1}{j} t^{j} \right) dt =
\frac{x(1)}{j}
-\int_{0}^1 x'(t)\frac{1}{j} t^{j} dt. \]
Thus,
\[ x(1) = \int_{0}^1 x'(t) t^{j} dt + j
\int_{0}^1 x(t) t^{j-1} dt  \]
Now we consider the first case characterized by an $L^2$-norm bound of the first derivative.
By the Cauchy-Schwarz inequality, we get that
\begin{align}\label{myugl}
\begin{split}
|x(1)| &\leq \|x'\|_{L^2(0,1)} \|t^j\|_{L^2(0,1)} + \left| j
\int_{0}^1 x(t) t^{j-1} dt \right| \\
& = \|x'\|_{L^2(0,1)} \frac{1}{\sqrt{2j+1}} + \left|j
\int_{0}^1 x(t) t^{j-1} dt \right|.
\end{split}
\end{align}
This equation hold for all $j$.
Summing up inequality \eqref{myugl} for $j = 1,N$, where $C$ denoting here a generic constant, yields
\begin{align*}
|x(1)| &=
\frac{1}{N} \sum_{j=1}^N |x(1)|
\leq
E_1 \frac{1}{N} \sum_{j=1}^N \frac{1}{\sqrt{2j+1}} +
\frac{1}{N} \sum_{n=1}^N j\left| \int_{0}^1 x(t) t^{j-1} dt \right| \\
& \leq
E_1 \frac{1}{N}  \sum_{j=1}^N
\int_{j-1}^{j}\frac{1}{\sqrt{2z+1}} dz +
 \frac{1}{N}  \left(\sum_{j=1}^N j^2 \right)^\frac{1}{2}
 \left( \sum_{j=1}^N (\int_{0}^1 x(t) t^{j-1} dt )^2
 \right)^\frac{1}{2} \\
 &\leq E_1 \frac{\sqrt{2 N+1} -1} {N} +
  \frac{C \sqrt{N^3}}{N}  \delta
 \leq C E_1 \frac{1}{\sqrt{N}} + C \sqrt{N}\delta.
\end{align*}
We may minimize the above expression by balancing
both terms which leads to the choice
 $N = \frac{E_1}{\delta}$. Thus, we obtain the first
 result
  \begin{align*}
 |x(1)|  \leq C  (E_1 \delta)^\frac{1}{2}.
\end{align*}

\smallskip

In the second case using $E_\infty$ we replace
\eqref{myugl} by
\begin{align}\label{myugl1}
\begin{split}
|x(1)| &\leq \|x'\|_{L^\infty(0,1)} \int_0^1 t^j dt  + \left| j \int_{0}^1 x(t) t^{j-1} dt \right|  = \|x'\|_{L^\infty(0,1)} \frac{1}{j+1} +
j \left|\int_{0}^1 x(t) t^{j-1} dt\right|.
\end{split}
\end{align}
Proceeding in the same way,
where $\frac{1}{\sqrt{2j+1}}$ is now replaced by
$\int_0^1 t^j dt = \frac{1}{j+1}$,   we find here by summation for $j=1,N-1$,
\begin{align*}
|x(1)|
&\leq
E_\infty \frac{1}{N-1} \sum_{j=1}^{N-1} \frac{1}{j+1}+
\frac{1}{N-1} \sum_{j=1}^{N-1} j\left| \int_{0}^1 x(t) t^{j-1} dt \right| \\
&\leq E_\infty  \frac{1}{N-1} \sum_{j=1}^{N-1} \frac{1}{j+1}+
  \frac{C \sqrt{(N-1)^3}}{N-1}  \delta  \\
& \leq  E_\infty  \frac{1}{N-1} \int_1^{N} \frac{1}{t} dt +
  C \sqrt{N-1}  \delta  \\
  &\leq
   E_\infty  \frac{1}{N-1}  \ln(N)
   +  C \sqrt{N}  \delta \le 2 E_\infty  \frac{1}{N}  \ln(N)
   +  C \sqrt{N}  \delta.
\end{align*}
Now we choose $N=\delta^{-2/3}$ and obtain for sufficiently small $\delta>0$ and some constant
$\tilde C>0$ depending on $E_\infty$ and $C$ that
$$|x(1)| \leq  2 E_\infty\, \delta^{2/3} \ln(\delta^{-2/3})+  C\, \delta^{2/3} =\frac{4}{3} E_\infty\,\delta^{2/3} \ln(\delta^{-1})+  C\, \delta^{2/3}
\le \tilde C \,\delta^{2/3}\,\ln(1/\delta).$$
Namely, we have $1 \le \ln(\delta^{-1})$ for $\delta \le 1/e$.

\smallskip

If, however, $\delta \ge \underline{c}>0$, then we have with $\|x^\prime\|_{L^\infty(0,1)} \le E_\infty$
and for $N=2$ that
$$|x(1)| \le E_\infty\,\ln(2)+C\sqrt{2}\,\delta \le \left(\frac{E_\infty \,\ln(2)}{\underline{c}}+C\sqrt{2}\right)\,\delta=\bar{C}\,\delta.$$
This completes the proof.
 \end{proof}

\section{Numerical case studies} \label{sec:casestudies}
\subsection{Numerical discussion of $\Lo^{-1}$}
One possible characterization of the range $\mathcal{R}(A)$ is given by \eqref{rangecond}. It is therefore of interest to study the operator $\Lo^{-1}$, and we will do so in the following in the discrete setting, where we consider truncations $\Lo^{-1}_n:=(\Lo^{-1}_{ij})_{i,j=1}^n$, $n\in\mathbb{N}$. The matrices $\Lo^{-1}_n$ are the Cholesky factors of the inverse Hilbert matrix $\mathcal{H}_n^{-1}$ introduced in \eqref{eq:vector}. Thus we have $\mathcal{H}_n^{-1}=\Lo^{-1}_n(\Lo^{-1}_n)^T$. In particular, this means that $\|\Lo^{-1}_n\|_2^2=\|\mathcal{H}_n^{-1}\|_2$ where $\|\cdot\|_2$ is the largest singular value of the matrix. Consequently, we have the bound $\|\Lo^{-1}_n\|\leq  C \exp(1.763n)$ from \eqref{eq:hilbertmatrx_norm}. This asymptotics can be confirmed numerically as shown in Figure \ref{fig:Linvgrowth}. It is interesting that the asymptotics $C\exp(1.763n)$ not only describes the norms $\|\Lo^{-1}_n\|$, but also the row-wise absolute maxima of the matrices. Because the entries of $|\Lo|^{-1}_n$ have alternating sign, we set $|\Lo|^{-1}_n=|(\Lo^{-1}_n)_{ij}|$, $i,j=1,\dots,n$. Figure \ref{fig:Linvgrowth} demonstrates that
$$
\max_{j=1,\dots, i} (|\Lo|^{-1}_n)_{ij}\leq C\exp(1.763n), \quad i=1,\dots n.
$$
The maxima are not found on the main diagonal $(|\Lo|^{-1}_n)_{ii}$, $i=1,\dots,n$, but for some $[\frac{i+1}{2}]<j<i$. On the main diagonal itself, the entries grow with a slightly lower, approximately $(|\Lo|^{-1}_n)_{ii}\leq C\exp(1.4 i)$. A plot of selected rows of $|\Lo|^{-1}_n$ is given in Figure~\ref{fig:Lrows}. Clearly, the entries grow fast both in row index $i$ and in column index $j$. Hence, one would expect that sequences $\{y_i\}_{i=1}^\infty$ satisfying the range condition \eqref{rangecond} have to decay rapidly, which is in contrast to the slowly decaying example at the end of Section 3. This seeming contradiction is resolved because the entries of $\Lo_n^{-1}$ have alternating sign, which means that the sums
\[
\sum_{j=1}^i (\Lo_n^{-1})_{ij} y_j, \quad i=1,2,\dots
\]
do not necessarily explode, for example when the elements of $\{y_i\}_{i=1}^\infty$ have constant sign as is the case for the example \eqref{eq:y}. This suggests that monotonicity might play a crucial role in the characterization of the stable subspace $Y_1$.

\begin{figure}
\includegraphics[width=\linewidth]{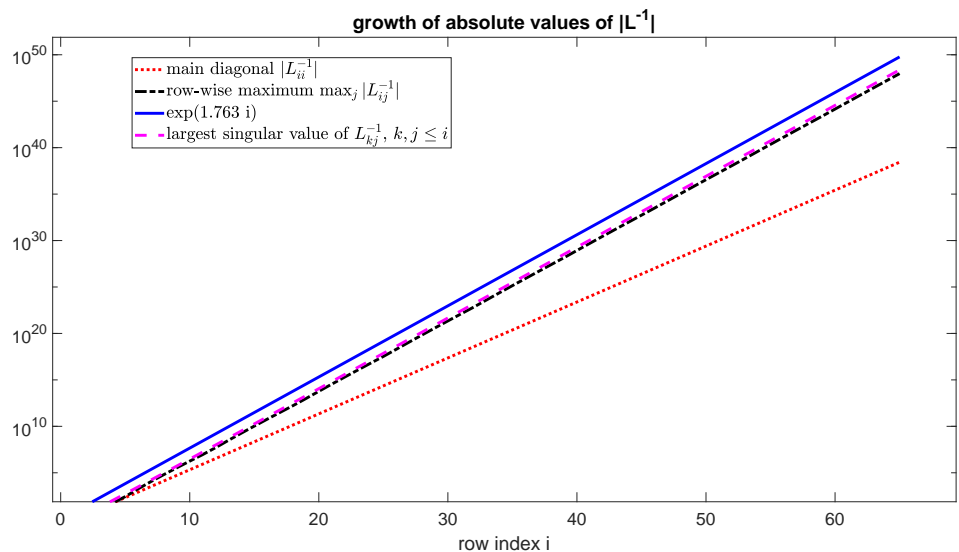}\caption{Numerical studies for $\Lo^{-1}_n$, $n=65$. Blue, solid: asymptotic bound $\exp(1.763 i)$. Magenta, dashed: norms $\|\Lo^{-1}_i\|_2$. Black, dash-dotted: row-wise maxima of $|\Lo|^{-1}_n$. Red, dotted: main diagonal of $|\Lo|^{-1}_n$.}\label{fig:Linvgrowth}
\end{figure}

\begin{figure}
\includegraphics[width=\linewidth]{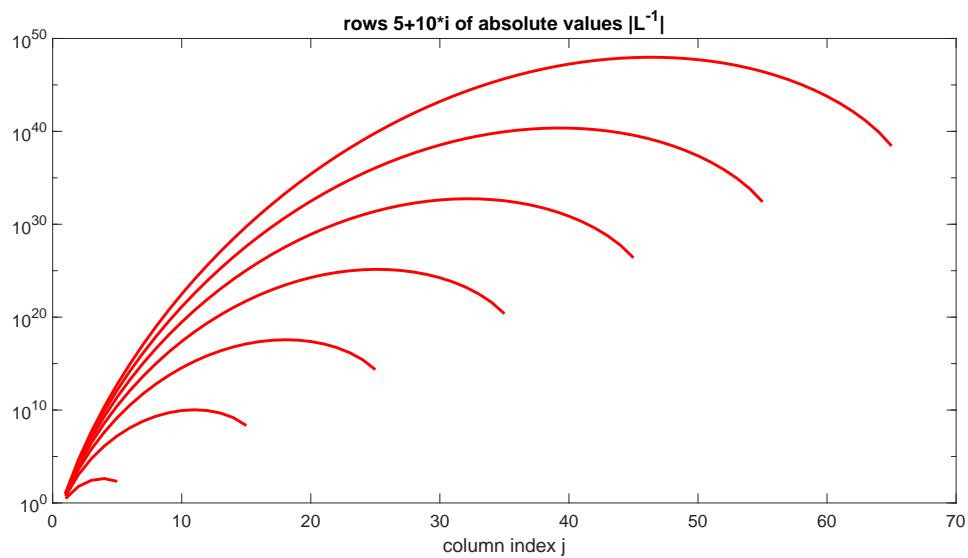}\caption{Numerical studies for $|\Lo|^{-1}_n$, $n=65$. Plot of rows $5+10k$, $k=1,\dots,6$.}\label{fig:Lrows}
\end{figure}
\newpage
\subsection{Numerical verification of the noise amplification factor}
The following case study illustrates and complements the results of Remark~\ref{rem:rem3} and in particular of the upper estimate~\eqref{eq:Andagger_bound}. We show that the regularization error in~\eqref{eq:est1} is driven by the amplification factor $\|A_n^\dagger\|_{\mathcal{L}(\ell^2,L^2(0,1))}$ and that the upper limit~\eqref{eq:Andagger_bound} and the resulting rate in n presents a reasonable bound in practical situations.
Therefore we introduce a test case with exact solution
\begin{equation*}
\tilde x(t)=0.2+ \frac{0.36}{1+100(2.05t-0.2)^2}\quad (0 \le t \le 1).
\end{equation*}
Because $\tilde x$ is almost constant for $t>0.5$ with $\tilde x(t)\approx 0.2$, it is easy to see that $[A\tilde x]_i=\mathcal{O}(\frac{0.2}{i})$.
In the next step we calculate for a sample of noisy data and fixed $n$ the minimum-norm solutions in the noisy case
\begin{equation*}
x_n^\delta:=A_n^\dagger y^\delta={\rm argmin}\{\|x\|_{L^2(0,1)}|\, x \in L^2(0,1): \|A_n x -  y^\delta\|_{\ell^2}=\min\}
\end{equation*}
and the associated version $x^\dagger_n:=A_n^\dagger y$ for the noise-free case ($\delta=0)$.
As the exact solution is assumed to be known, we can can compute the regularization errors \linebreak $\|x^\delta_n-x_n^\dagger\|_{L^2(0,1)}$.
Subsequently we perform a linear regression in accordance with Remark~\ref{rem:rem3} for decaying noise level and fixed n, i.e., we assume
\begin{equation*}
\|x^\delta_n-x_n^\dagger\|_{L^2(0,1)} \approx \|A_n^\dagger\|_{\mathcal{L}(\ell^2,L^2(0,1))} \delta.
\end{equation*}
As a consequence we receive estimators for the amplification factor $\|A_n^\dagger\|_{\mathcal{L}(\ell^2,L^2(0,1))}$ and denote these estimators with $f_n$. In order to improve the accuracy of the results multiple realizations of the error were used. A regression was performed for each realization. The presented results are the mean of these regressions. This behaviour is visualized in Figure~\ref{fig:chris1}.
In this context, Figure~\ref{fig:chris2} visualizes the quotient
$\ln(f_n)/n$ for various $n$, which can be interpreted as the factor to $n$ in the exponent of the estimation~\eqref{eq:Andagger_bound}.  We conclude that the numerical observations coincide with the previously introduced theoretical findings and the resulting rates match.
\begin{figure}
\centering
\includegraphics[width=\linewidth]{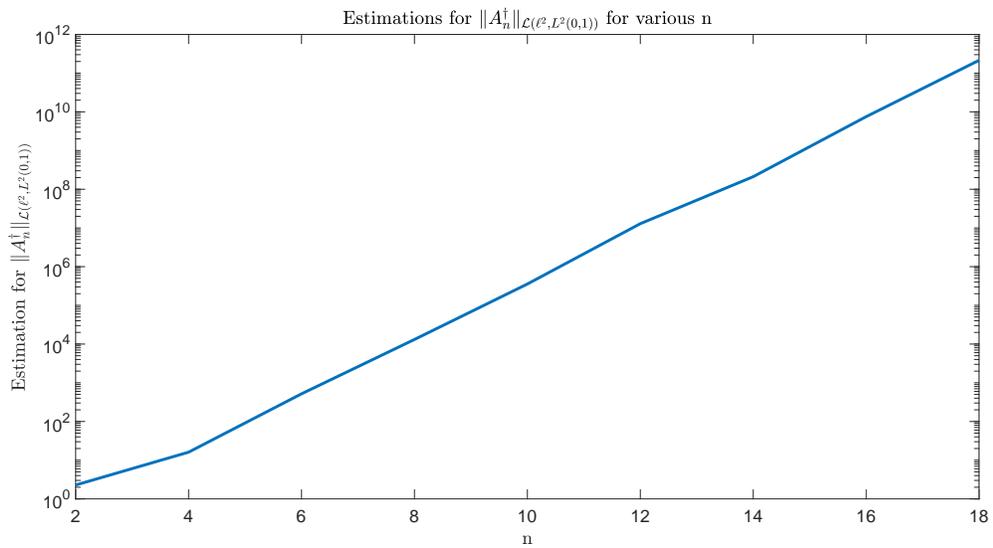}\caption{Estimations for the amplification factor $\|A_n^\dagger\|_{\mathcal{L}(\ell^2,L^2(0,1))}$ for various n.}
\label{fig:chris1}\end{figure}
\begin{figure}
\centering
\includegraphics[width=\linewidth]{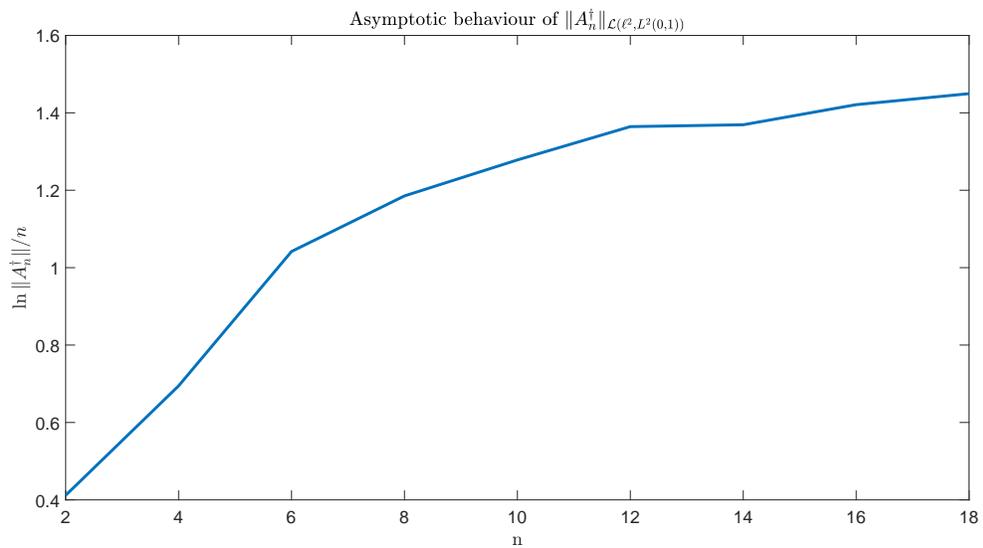}\caption{Quotient $\ln(f_n)/n$ for estimations of the amplification factor $\|A_n^\dagger\|_{\mathcal{L}(\ell^2,L^2(0,1))}$ and various n.}
 \label{fig:chris2}\end{figure}
\section*{Acknowledgement}
BH and CH have been supported by the German Science Foundation (DFG) under the grant
HO 1454/12-1, Project Number 391100538. DG has been supported by the German Science Foundation (DFG) under the grant GE 3171/1-1, Project Number 416552794.


\begin{flushleft}

Daniel Gerth,\\
Chemnitz University of Technology, \\
Faculty of Mathematics, 09107 Chemnitz, Germany,\\
Email: {\tt daniel.gerth@mathematik.tu-chemnitz.de},\\

\smallskip

Bernd Hofmann,\\
Chemnitz University of Technology, \\
Faculty of Mathematics, 09107 Chemnitz, Germany,\\
Email: {\tt bernd.hofmann@mathematik.tu-chemnitz.de},\\

\smallskip

Christopher Hofmann,\\
Chemnitz University of Technology, \\
Faculty of Mathematics, 09107 Chemnitz, Germany,\\
Email: {\tt christopher.hofmann@mathematik.tu-chemnitz.de},\\

\smallskip

Stefan Kindermann,\\
Johannes Kepler University Linz, Industrial Mathematics Institute,\\
Altenbergerstra{\ss}e 69, A-4040 Linz, Austria,\\
Email: {\tt kindermann@indmath.uni-linz.ac.at}

\end{flushleft}

\end{document}